\def\a{{\alpha}}
\def\b{{\beta}}
\def\e{{\epsilon}}
\def\w{{\omega}}
\def\k{{\Bbbk}}
\def\l{{\lambda}}
\def\s{{\sigma}}
\def\Lam{{\Lambda}}
\def\MM{\mathcal{M}}
\def\OO{\mathfrak{O}}
\def\AA{\mathcal{A}}
\def\lbidu{{}^{\vee\vee}}
\def\ldu{{}^{\vee}}
\newcommand\Irr{\operatorname{Irr}}
\newcommand\Soc{\operatorname{Soc}}
\newcommand\head{\operatorname{Head}}
\newcommand\ord{\operatorname{ord}}
\newcommand\Tr{\operatorname{Tr}}
\newcommand\id{\operatorname{id}}
\DeclareMathOperator\ev{{\operatorname{ev}}}
\DeclareMathOperator\db{\operatorname{db}}
\def\to{\rightarrow}
\def\Hom{{\mbox{\rm Hom}}}
\def\End{{\mbox{\rm End}}}
\newcommand\enumeri[1]{\begin{enumerate}[label=\rm(\roman*), leftmargin=*] #1 \end{enumerate}}
\newcommand\enumera[1]{\begin{enumerate}[label=\rm(\arabic*), leftmargin=*] #1 \end{enumerate}}
\newcommand\C[1]{#1\mbox{-\bf{mod}}_{\operatorname{\mathsf{fin}}}}
\newcommand\ld[1]{{}_{#1}}
\newcommand\lu[1]{{}^{#1}}
\newcommand\inv{^{-1}}
\newcommand\bidu{^{\vee\vee}}
\newcommand\quaddu{^{\vee\vee\vee\vee}}
\newcommand\du{^{\vee}}
\newcommand\ol[1]{\overline{#1}}
\renewcommand\o{\otimes}
\newtheorem{thm}{Theorem}[section]
\newtheorem{cor}[thm]{Corollary}
\newtheorem{prop}[thm]{Proposition}
\newtheorem{lem}[thm]{Lemma}
\theoremstyle{definition}
\theoremstyle{remark}
\newtheorem{remark}{Remark}[section]
\def\namelabel#1#2{\@bsphack
  \protected@write\@auxout{}%
         {\string\newlabel{#1.nme}{{#2}{#2}}}%
  \@esphack}
\def\nmlabel#1#2{\label{#2}\namelabel{#2}{#1}}
\newcommand\nmref[1]{\ref{#1.nme}\ \ref{#1}}
\title{Hopf algebras of dimension $pq$, II}
\author{Siu-Hung Ng}
\address{Department of Mathematics, Iowa State University, Ames, IA 50011, USA.}
 \email{rng@iastate.edu}
 \thanks{The research was partially supported by NSA grant no. H98230-05-1-0020.}
\begin{document}
\maketitle
\begin{abstract}
  Let $H$ be a Hopf algebra of dimension $pq$ over an algebraically
  closed field of characteristic zero, where $p, q$ are odd primes with $p< q \le 4p+11$.
  We prove that $H$ is semisimple and thus isomorphic to a group algebra, or the dual of a group algebra.
\end{abstract}
\section*{Introduction}
In recent years, there has been some progress on the classification
of finite-dimensional Hopf algebras of dimension $pq$ over an
algebraically closed field $\k$ of characteristic 0, where $p,q$ are
prime numbers.  The case for $p=q$ has been settled completely in
\cite{Ng02}, \cite{Ng02a} and \cite{Mas96}. They are group algebras
and Taft algebras  of dimension $p^2$ (cf. \cite{Taft71}). However,
the classification for the case $p \ne q$  remains open in general.

In the works \cite{EG99} and \cite{GW00}, the semisimple case of the
problem has been solved; namely, any semisimple Hopf algebra of
dimension $pq$ is isomorphic to  a group algebra or the dual of a
group algebra. It is natural to ask whether Hopf algebras of
dimension $pq$, where $p<q$ are prime numbers, are always
semisimple. The question has been answered affirmatively in some
specific low dimensions. Williams settled the dimensions 6 and 10 in
\cite{WI}, Andruskiewitsch and Natale did dimensions 15, 21, and 35
\cite{AN1}, and Beattie and D{\u{a}}sc{\u{a}}lescu did dimensions
14, 55, 65, and 77 \cite{BD04}.

    More recently, the author has proved that if $p, q$ are
twin primes, or  $p=2$, then Hopf algebras of dimension $pq$ are
semisimple (cf. \cite{Ng04} and \cite{Ng05}). Meanwhile, Etingof and
Gelaki proved the same result for $2 < p < q \le 2p+1$ 
by considering the indecomposable projective modules over these Hopf
algebras (cf. \cite{EG03}).

In this paper, we will study the indecomposable modules over these
Hopf algebras. We prove that if $p, q$ are odd primes and $p < q \le
4p+11$, then every Hopf algebra of dimension $pq$ over $\k$ is
semisimple (Theorem \ref{t:main}). The result covers all odd
dimensions listed above.

The organization of the paper is as follows: we begin with some
notations and preliminary results for modules over a
finite-dimensional Hopf algebra in Section 1. In Section 2, we
obtain a lower bound for the dimensions of certain indecomposable
modules over a non-semisimple Hopf algebra $H$ of dimension $pq$. In
Section 3, we further assume $H$ is not unimodular, and consider the action of the group
$G(H^*)$ of all group-like elements of $H^*$ on the set $\Irr(H)$ of
isomorphism classes of simple $H$-modules. We obtain some lower
bounds for the number of $G(H^*)$-orbits in $\Irr(H)$. We finally
prove our main result in Section 4.

Throughout this paper, the base field $\k$ is always assumed to be
algebraically closed of characteristic zero and the tensor product
$\otimes$ means $\otimes_\k$, unless otherwise stated. The notation
introduced in Section 1 will continue to be used in the remaining
sections. The readers are referred to \cite{Mont93bk} and
\cite{Sw69} for elementary properties of Hopf algebras.
\section{notations and preliminaries}
Let $H$ be a finite-dimensional Hopf algebra over  $\k$ with
antipode $S$, counit $\e$. There are natural actions
$\rightharpoonup$ and $\leftharpoonup$ of $H^*$ on $H$ given by
$$
f \rightharpoonup a = a_1 f(a_2), \quad \text{and}\quad a
\leftharpoonup f = f(a_1)a_2
$$
where $\Delta(a)=a_1 \o a_2$ is Sweedler's notation with the
summation suppressed. For $f \in H^*$, we define the $\k$-linear
endomorphism $L(f)$ and $R(f)$ on $H$ by
$$
L(f)(a)= f \rightharpoonup a, \quad\text{and}\quad R(f)(a)= a
\leftharpoonup f.
$$

A non-zero element $a$ of $H$ is said to be group-like if
$\Delta(a)=a \o a$, and we denote by $G(H)$ the set of all
group-like elements of $H$. For $\b \in G(H^*)$,  $\b$ is an algebra
epimorphism from $H$ onto $\k$. The associated maps $L(\b)$, $R(\b)$
are algebra automorphisms of $H$, and they commute with $S^2$.
Moreover, each $\b \in G(H^*)$ is a degree 1 irreducible character
of $H$. We will denote by $\k_\b$ the 1-dimensional $H$-module which
affords the character $\b$. In particular, $\k_\e$ is the trivial
$H$-module $\k$.

Let $\Lam$ be a non-zero left integral of $H$.  The distinguished
group-like element  $\a$ of $H^*$ is defined by $\Lam a  =
\a(a)\Lam$ for $a \in H$. Similarly, if $\l \in H^*$ is a non-zero
right integral of $H^*$, the distinguished group-like element $g$ of
$H$ is defined by $f*\l  = f(g) \l $ for all $f \in H^*$. In this
convention, the celebrated Radford formula \cite{Radf76} is given by
\begin{equation}\label{eq:Rad_ant}
  S^4(h) = g(\a\rightharpoonup h \leftharpoonup\a\inv)g\inv \quad
  \text{for }h \in H\,.
\end{equation}
The non-zero right integral  $\l$ defines a non-degenerate
associative bilinear form on $H$, and so $H$ is a Frobenius algebra.
By \cite{Radf94},
\begin{equation}\label{eq:nakayama}
  \l(ab)=\l(\theta(b)a) \quad \text{for } a, b \in H\,,
\end{equation}
where $\theta(b)= S^2(b \leftharpoonup \a)$. Therefore, $\theta\inv$
is the associated \emph{Nakayama automorphism}.

Recall that the \emph{left dual} $V\du$ of an $H$-module $V$ is the
left $H$-module with the underlying space $V^*=\Hom_\k(V,\k)$ and
the $H$-action given by
$$
(hf)(x)= f(S(h)x)\quad \mbox{for}\quad x \in V, \quad f \in V^*\,.
$$
Similarly, the \emph{right dual} $\lu{\vee}V$ of $V$ is the left
$H$-module $V^*$ with the same underlying space $V^*$ and the
$H$-action given by
$$
(hf)(x)= f(S\inv(h)x)\quad \mbox{for}\quad x \in V, \quad f \in
V^*\,.
$$

Given an algebra automorphism $\sigma$ on $H$, one can \emph{twist}
the action of an $H$-module $V$ by $\sigma$ to obtain another
$H$-module $\ld{\sigma}V$. More precisely,  $\ld{\sigma}V$ is an
$H$-module on $V$ with the action given by
$$
h \cdot_\sigma v = \sigma(h)v\quad \text{for } h \in H,\, v \in V.
$$
$\ld\sigma (-)$ defines a $\k$-linear equivalence on the category
$\C{H}$ of finite-dimensional left $H$-module. In particular, if
$\sigma$ is an inner automorphism of $H$, then $\ld \sigma (-)$ is
$\k$-linearly equivalent to the identity functor.

Using the above notation, one can easily see that $\rho: \k_{\b} \o
V\bidu \to \ld{S^2 \circ R(\b)}V$ defined by
\begin{equation}\label{eq:rho}
\rho(1 \o \hat{v}) = v
\end{equation}
is a natural isomorphism of $H$-modules for $V \in \C{H}$ and $\b
\in G(H^*)$, where $\hat{v} \in V^{**}$ denotes the natural image of
the element $v \in V$. In particular, we have the $H$-module
isomorphisms
$$
V\bidu \cong  \ld{S^2}V,  \quad\text{and} \quad \k_\a \o V\bidu
\cong \ld{\theta}V\,.
$$
Similarly, one also have the $H$-module isomorphisms
$$
 \lbidu V \cong \ld{S^{-2}}V \quad\text{and} \quad \k_{\a\inv} \o
\lu{\vee\vee}V \cong \ld{\theta\inv}V\,.
$$
By Radford's antipode formula \eqref{eq:Rad_ant}, we also have
\begin{equation}\label{eq:swap}
 \k_{\a}\o V\quaddu \o \k_{\a\inv} \cong \k_{\a}\o \ld{S^{4}}V \o
 \k_{\a\inv}
 \cong V\,.
\end{equation}

It follows from a property of Frobenius algebras that an $H$-module
is projective if, and only if, it is injective. Each indecomposable
projective $H$-module $P$ is isomorphic to $He$ for some primitive
idempotent $e \in H$. Moreover, the socle $\Soc(P)$ and the head
$\head(P)=P/JP$  of  $P$ are simple (cf. \cite[IX]{CR62book}). Let us denote the projective
cover and the injective envelope of an $H$-module $V$ by $P(V)$ and
$E(V)$ respectively. If $V$ is a simple $H$-module, then $\ol V=
\Soc(P(V))$ is also simple, and so $P(V) \cong E(\ol V)$. The
assignment of simple $H$-module  $V \mapsto \Soc(P(V))$ defines a
permutation on a complete set of non-isomorphic simple $H$-modules,
and its inverse $\pi$ is called the \emph{Nakayama permutation} (cf.
\cite[\S 16A]{Lam99book}), i.e. $P(\pi(V))\cong E(V)$.  By \cite[\S
16C]{Lam99book}, if $P(V) \cong He$ for some primitive idempotent $e
\in H$, we have
\begin{equation}\label{eq:nak_per}
P(\pi(V)) \cong H\theta\inv(e)
\end{equation}
and so $V \cong \Soc(H \theta\inv(e))$. This allows us to rewrite
the Nakayama permutation as in the following lemma.
\begin{lem}\label{p:nakayama perm}
  Let $H$ be a finite-dimensional Hopf algebra over $\k$ with
  distinguished group-like element $\a \in H^*$, and
  $V$  a simple $H$-module. Then $\pi(V) \cong \ld \theta V$
  and hence
  \begin{equation}\label{eq:soc p}
    \k_{\a\inv} \o \lu{\vee\vee}V \cong \Soc (P(V))\quad\text{and} \quad
  V \cong \Soc (P(\k_\a \o V\bidu))\,.
  \end{equation}
  Moreover, $V$ is projective if, and only if, $\dim P(V)< 2\dim
  V$. In this case, $V \cong \k_\a \o V\bidu$ as $H$-modules.
\end{lem}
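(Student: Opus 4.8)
The plan is to realise the Nakayama permutation $\pi$ explicitly as the twisting functor $\ld\theta(-)$ and then to read off every assertion from the module isomorphisms collected just before the lemma. The key preliminary observation is how twisting interacts with the idempotent description of projectives: for any algebra automorphism $\sigma$ of $H$, the map $x \mapsto \sigma\inv(x)$ is an isomorphism $\ld\sigma H \cong H$ of left $H$-modules, and restricting it to the direct summand $\ld\sigma(He)$ identifies the latter with $H\sigma\inv(e)$; thus $\ld\sigma(He) \cong H\sigma\inv(e)$ for every idempotent $e \in H$. Taking $\sigma = \theta$ and $P(V) \cong He$, this gives $\ld\theta(P(V)) \cong H\theta\inv(e)$, which by \eqref{eq:nak_per} is precisely $P(\pi(V))$. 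Since $\ld\theta(-)$ is a $\k$-linear equivalence it sends $P(V)$ to the projective cover of $\ld\theta V$, so $P(\ld\theta V) \cong P(\pi(V))$; comparing the (simple) heads of these isomorphic projectives yields $\ld\theta V \cong \pi(V)$. Combined with the isomorphism $\k_\a \o V\bidu \cong \ld\theta V$ recorded before the lemma, this establishes $\pi(V) \cong \ld\theta V \cong \k_\a \o V\bidu$.

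The socle formulas then follow formally. As permutations of the isomorphism classes of simple modules we now have $\pi = \ld\theta$, hence $\pi\inv = \ld{\theta\inv}$; since $\pi\inv$ is by definition the assignment $W \mapsto \Soc(P(W))$, we obtain $\Soc(P(V)) \cong \ld{\theta\inv}V \cong \k_{\a\inv}\o\lu{\vee\vee}V$, the last step using the isomorphism $\k_{\a\inv}\o\lu{\vee\vee}V \cong \ld{\theta\inv}V$ from the excerpt. Applying $\Soc(P(-))$ to $\k_\a\o V\bidu \cong \pi(V)$ and using $\Soc(P(\pi(V))) \cong \Soc(E(V)) = V$ gives the second formula $V \cong \Soc(P(\k_\a \o V\bidu))$.

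For the projectivity criterion, if $V$ is projective then $P(V) = V$ and $\dim P(V) = \dim V < 2\dim V$. Conversely, suppose $V$ is not projective, so that $P(V)$ is indecomposable but not simple. Its simple socle $\Soc(P(V))$ must then lie inside the radical $JP(V)$: otherwise $\Soc(P(V))$ would map isomorphically onto the simple head $V = P(V)/JP(V)$ and split off $P(V)$, contradicting indecomposability. Hence $\dim JP(V) \ge \dim\Soc(P(V)) = \dim V$, the equality because $\Soc(P(V)) \cong \k_{\a\inv}\o\lu{\vee\vee}V$ has the same dimension as $V$; therefore $\dim P(V) = \dim V + \dim JP(V) \ge 2\dim V$, which is the contrapositive. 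Finally, when $V$ is projective it is injective by the Frobenius property, so $E(V) = V$ and $P(\pi(V)) \cong E(V) = V$ is simple, forcing $\pi(V) \cong V$; with $\pi(V) \cong \k_\a\o V\bidu$ this gives $V \cong \k_\a \o V\bidu$.

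The only genuinely nontrivial step I anticipate is the first one: pinning down $\ld\theta(He) \cong H\theta\inv(e)$ and thereby identifying the abstractly defined permutation $\pi$ with the concrete twisting functor $\ld\theta(-)$. Once that identification is secured, the two socle formulas, the dimension bound, and the closing isomorphism are all short formal consequences of the isomorphisms listed before the statement.
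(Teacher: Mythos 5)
Your proposal is correct and follows essentially the same route as the paper: both identify $\pi$ with the twisting functor $\ld\theta(-)$ via the isomorphism $\ld\theta(He) \cong H\theta\inv(e)$ together with \eqref{eq:nak_per}, and then read off the socle formulas and the projectivity criterion from the isomorphisms $\ld\theta V \cong \k_\a \o V\bidu$ and $\ld{\theta\inv}V \cong \k_{\a\inv}\o\lu{\vee\vee}V$. Your only departures are cosmetic --- you spell out the socle-in-radical argument that the paper leaves implicit in the inequality $\dim P(V) \ge \dim\Soc(P(V)) + \dim\head(P(V))$, and you close via injectivity of $V$ rather than projectivity of $\k_\a \o V\bidu$ --- both of which are fine.
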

\begin{proof} Let $e$ be a primitive idempotent of $H$ such that
$\head (He) \cong V$. Note that $\ld \theta (He) \cong
  H\theta\inv(e)$ as $H$-modules under the map
  $he  \mapsto \theta\inv(he)$. By \eqref{eq:nak_per},
  $$
  \pi(V) \cong \head \ld\theta (He) \cong \ld \theta V\,.
  $$
  Since $\ld \theta V \cong \k_\a \o V\bidu$, the second isomorphism of \eqref{eq:soc p}
  follows  immediately from the definition of Nakayama
  permutation. Since $\k_{\a\inv} \o \lbidu(\k_\a \o V\bidu) \cong
  V$, we have $\pi\inv(V) \cong \k_{\a\inv} \o \lbidu V$ and hence
  $\k_{\a\inv} \o \lbidu V \cong \Soc(P(V))$.

  If $V$ is not projective, then
  $$
  \dim P(V) \ge \dim \Soc(P(V)) +\dim \head(P(V)) =2\dim V\,.
  $$
  Obviously, if $V$ is projective, then $P(V)=V$ and hence $\dim P(V) <2 \dim
  V$. In this case, $\k_\a \o V\bidu$ is also projective. Therefore,
  we have
  $$
  V \cong \Soc P(\k_\a \o V\bidu) \cong \k_\a \o V\bidu. \qedhere
  $$
\end{proof}

\begin{lem} \label{l:trace0} Let $V \in \C{H}$ such that $V \cong
\k_\b \o V\bidu$ for some non-trivial  $\b \in G(H^*)$. Then
$\Tr(\tau)=0$ for $\tau \in \Hom_H(V, \ld{\sigma}V)$, where $\sigma
=S^2\circ R(\b)$.
\end{lem}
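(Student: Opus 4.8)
The plan is to realize the scalar $\Tr(\tau)$ as the image of $1$ under a homomorphism of $H$-modules $\k_\e \to \k_\b$, and then to invoke $\b \neq \e$ to see that this forces $\Tr(\tau)=0$.

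First I would move $\tau$ across the isomorphism of \eqref{eq:rho}. Since $\rho: \k_\b \o V\bidu \to \ld{\sigma}V$ is an isomorphism of $H$-modules, the map $\tilde\tau := \rho\inv\circ\tau$ is an $H$-linear map $V \to \k_\b \o V\bidu$. On underlying vector spaces $\rho$ is nothing but the canonical identification $V\bidu \cong V$ followed by deletion of the one-dimensional factor $\k_\b$ (indeed $\rho(1\o\hat v)=v$). Consequently $\Tr(\tau)$, computed as the trace of $\tau$ viewed as an endomorphism of the common underlying space of $V$ and $\ld{\sigma}V$, equals the trace of the composite $V \xrightarrow{\tilde\tau} \k_\b \o V\bidu \to V$, where the second arrow forgets $\k_\b$ and identifies $V\bidu$ with $V$.

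Next I would express this trace through the rigidity structure of $\C{H}$. Consider the composite of $H$-linear maps
\[ \k_\e \xrightarrow{\ \db\ } V \o V\du \xrightarrow{\ \tilde\tau\o\id\ } \k_\b \o V\bidu \o V\du \xrightarrow{\ \id\o\ev\ } \k_\b, \]
where $\db:\k_\e \to V\o V\du$ is the coevaluation (the invariant element $\sum_i e_i\o e^i$, corresponding to $\id_V$) and $\ev: V\bidu\o V\du \to \k_\e$ is the evaluation for the module $V\du$, followed by the unit identification $\k_\b\o\k_\e\cong\k_\b$. Tracking $1\in\k_\e$, the element $\db(1)=\sum_i e_i\o e^i$ is carried to $\sum_i \tilde\tau(e_i)\o e^i$, and $\id\o\ev$ contracts the $V\bidu$-component of $\tilde\tau(e_i)$ against $e^i$; writing $\tilde\tau(e_i)=1\o\xi_i$, the output is $\sum_i\xi_i(e^i)$, which by the identification of the previous paragraph is exactly $\Tr(\tau)$. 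Thus this $H$-module homomorphism $\k_\e \to \k_\b$ sends $1\mapsto \Tr(\tau)\cdot 1$. Because $\b\neq\e$, the characters differ and $\k_\e\not\cong\k_\b$, so $\Hom_H(\k_\e,\k_\b)=0$; the homomorphism must vanish, and therefore $\Tr(\tau)=0$.

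The point I would treat most carefully — and the main obstacle — is the $H$-linearity of the two structure maps. The naive trace formula needs a flip $V\o V\du \to V\du\o V$ that is \emph{not} $H$-linear, so one must instead build the contraction from the genuine rigidity maps of the left dual: the coevaluation $\db:\k_\e\to V\o V\du$, whose image $\id_V$ is invariant since $h\cdot\id_V=\sum h_1 S(h_2)=\e(h)\id_V$, and the evaluation $\ev:V\bidu\o V\du\to\k_\e$ of the module $V\du$. With the two tensor factors occurring in the order $V\bidu\o V\du$ forced by $\tilde\tau$, no transposition intervenes, every arrow in the composite is a morphism in $\C{H}$, and the final Schur-type vanishing becomes available.
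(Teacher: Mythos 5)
Your proposal is correct and follows essentially the same route as the paper: you build the same composite $\k \xrightarrow{\db} V \o V\du \to \k_\b \o V\bidu \o V\du \xrightarrow{\id\o\ev} \k_\b$ of $H$-module maps (merely packaging the paper's two middle arrows $\tau\o\id$ and $\rho\inv\o\id$ into the single map $\tilde\tau\o\id$), identify the resulting scalar with $\Tr(\tau)$, and conclude by $\Hom_H(\k_\e,\k_\b)=0$ for non-trivial $\b$. The careful point you flag --- using the evaluation of $V\du$ on $V\bidu\o V\du$ rather than an $H$-linear flip --- is exactly the structure the paper's displayed composition \eqref{eq:comp} relies on.
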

\begin{proof}
Note that the evaluation map $\ev : V\du \o V \to \k$ and the
diagonal basis map $\db: k \to V \o V\du$ defined by $\db(1)=\sum_i
x_i \o x^i$ are $H$-module homomorphisms, where $\{x_i\}$ is a basis
for $V$ and $\{x^i\}$ is the dual basis for $V^*$. Consider the
composition
  \begin{equation}\label{eq:comp}
 \k \xrightarrow{\db}  V \o V\du \xrightarrow{\tau \o \id} \ld{\sigma}V \o
 V\du
 \xrightarrow{\rho\inv \o \id} \k_\b \o V\bidu \o V\du
 \xrightarrow{\id \o \ev} \k_\b
  \end{equation}
  of $H$-module maps, where $\rho$ is defined in \eqref{eq:rho}.
   The composition is a scalar given by
  $$
  \sum_i x^i (\tau(x_i)) =\Tr(\tau).
  $$
  Since $\k$ and $\k_\b$ are not isomorphic as
  $H$-modules, the composition is a zero map and hence
  $$
  \Tr(\tau)=0\,.\qedhere
  $$
  \end{proof}
  The composition displayed in \eqref{eq:comp} may not be zero if $\b$ is trivial.
  However, if $H$ is not semisimple and $V$ is projective, then such composition must be
  zero. For otherwise, $\db: \k \to V\o V\du$ is a split embedding and
  hence $\k$ is a direct summand of the projective $H$-module $V \o
  V\du$. This implies $\k$ is projective. However, $\k$ is not projective if $H$ is
  not semisimple (cf. \cite[Lemma 2.2]{EG03}). The reason for this observation has been presented in the
  proofs of \cite[Lemma 2.11]{EG03}, \cite[Theorem 2.16]{EO04} and
  \cite[Theorem 2.3 (b)]{Lo97}. We state this conclusion as
\begin{lem} \label{l:2ndtrace0}
  Let $V$ be a projective module over a non-semisimple finite-dimensional Hopf algebra $H$
  such that $V \cong V\bidu$. Then for all homomorphism $\phi: V \to
  \ld{S^2} V$ of $H$-modules, $\Tr(\phi)=0$. \qed
\end{lem}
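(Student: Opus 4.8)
The plan is to run the argument of Lemma \ref{l:trace0} with the \emph{trivial} group-like element $\b=\e$, for which $\sigma = S^2\circ R(\e)=S^2$. Under the hypothesis $V\cong V\bidu$, the natural isomorphism $\rho$ of \eqref{eq:rho} identifies $\k_\e\o V\bidu = V\bidu$ with $\ld{S^2}V$, so I would form the composition \eqref{eq:comp} in this special case, namely
\[
\k \xrightarrow{\db} V\o V\du \xrightarrow{\phi\o\id} \ld{S^2}V \o V\du \xrightarrow{\rho\inv\o\id} \k_\e\o V\bidu \o V\du \xrightarrow{\id\o\ev} \k_\e\,.
\]
Exactly as in the proof of Lemma \ref{l:trace0}, each arrow is an $H$-module map, so the whole composite is an $H$-module endomorphism of $\k$, i.e.\ multiplication by a scalar, and the same bookkeeping $\sum_i x^i(\phi(x_i))$ shows that this scalar is $\Tr(\phi)$.

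Unlike in Lemma \ref{l:trace0}, here the source and the target are both the trivial module, so I cannot force vanishing from a nonisomorphism $\k\not\cong\k_\b$; the conclusion must instead come from non-semisimplicity. I would therefore argue by contradiction: suppose $\Tr(\phi)\neq 0$. Then the displayed composite is a nonzero scalar, so $(\Tr\phi)\inv$ times the last three maps furnishes a left inverse to $\db\colon \k\to V\o V\du$. Consequently $\db$ is a split monomorphism and $\k$ is a direct summand of $V\o V\du$ as an $H$-module.

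To finish, I would invoke the two standard facts recorded in the paragraph preceding the statement. Since $V$ is projective over the Hopf algebra $H$, the tensor product $V\o V\du$ is again projective, because tensoring a projective $H$-module by any finite-dimensional $H$-module yields a projective $H$-module. Being a direct summand of a projective module, $\k$ would then be projective; but $H$ is not semisimple, so $\k$ is \emph{not} projective by \cite[Lemma 2.2]{EG03}, a contradiction. Hence $\Tr(\phi)=0$. The only ingredient beyond the formal computation of the composite is the projectivity of $V\o V\du$, and this is precisely the point where the Hopf structure is essential, so I expect that stability of projectives under tensoring to be the crux of the argument.
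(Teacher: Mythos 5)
Your proposal is correct and is essentially the paper's own argument: the paper proves this lemma precisely by forming the composition \eqref{eq:comp} with $\b=\e$, observing it equals $\Tr(\phi)$, and noting that a nonzero value would split $\db$, making $\k$ a direct summand of the projective module $V\o V\du$ and hence projective, contradicting \cite[Lemma 2.2]{EG03}. The one ingredient you flag as the crux---that tensoring a projective $H$-module with any finite-dimensional module is again projective---is exactly the fact the paper invokes as well (see the remark before \eqref{eq:hom}).
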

We close this section with a generalization of \cite[Proposition
2.5]{EG03}.
\begin{lem} \label{l:div}
  Let $\b$ be a group-like element of $H^*$.  Suppose  that $V \in \C{H}$ is
  indecomposable and $\k_\b \o  V \cong V$. Then $\ord(\b)
  \mid \dim V$.
\end{lem}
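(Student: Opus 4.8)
The plan is to reduce the divisibility to a Fourier computation on the eigenvalues of a single operator attached to the hypothesised isomorphism. Write $n=\ord(\b)$ and $\gamma=R(\b)$; since $\b$ is group-like of order $n$, one checks that $\gamma$ is an algebra automorphism of $H$ of order exactly $n$. Fix an $H$-module isomorphism $f\colon \k_\b\o V\to V$ and identify $\k_\b\o V$ with $V$ as vector spaces via $1\o v\leftrightarrow v$. Because $h\cdot(1\o v)=1\o(h\leftharpoonup\b)v=1\o\gamma(h)v$, the map $f$ becomes a linear automorphism $T$ of $V$ with $T(\gamma(h)v)=h\,T(v)$ for all $h\in H$, $v\in V$. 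Iterating, $T^{k}$ underlies an isomorphism $\k_{\b^{k}}\o V\to V$ and satisfies $T^{k}(\gamma^{k}(h)v)=h\,T^{k}(v)$; taking $k=n$ and using $\gamma^{n}=\id$ gives $T^{n}(hv)=h\,T^{n}(v)$, so $T^{n}\in\End_H(V)$.

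Next I would normalise $T$. As $V$ is indecomposable, $\End_H(V)$ is local with residue field $\k$, so $T^{n}=c\,\id+N$ with $c\in\k^{\times}$ and $N$ in the nilpotent radical. Since $\k$ has characteristic zero, $(\id+c\inv N)$ has an $n$-th root $W\in\End_H(V)$ that is a polynomial in $T^{n}$; hence $W$ is $H$-linear and commutes with $T$, and after replacing $T$ by a suitable scalar multiple of $TW\inv$ we may assume $T^{n}=\id$ while keeping the relation $T(\gamma(h)v)=h\,T(v)$. Then $T$ is diagonalisable with eigenvalues among the $n$-th roots of unity, giving a vector-space decomposition
\begin{equation*}
V=\bigoplus_{k\in\BZ/n}V_{k},\qquad V_{k}=\ker(T-\zeta^{k}),
\end{equation*}
for a fixed primitive $n$-th root of unity $\zeta$. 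From $\dim V=\sum_k\dim V_k$ and $\Tr(T^{j})=\sum_k\zeta^{kj}\dim V_k$, the divisibility $n\mid\dim V$ follows once all the $\dim V_k$ are shown to be equal, equivalently once $\Tr(T^{j})=0$ for $1\le j\le n-1$ (a sequence whose discrete Fourier transform is supported at a single frequency is constant).

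Establishing this equidimensionality is the heart of the matter and the step I expect to be the main obstacle. The relation $T\inv(hv)=\gamma(h)\,T\inv(v)$ together with $T^{n}=\id$ makes $V$ a module over the crossed product $H\rtimes\langle u\rangle$, where $u$ acts as $T\inv$, $u^{n}=1$, and $uhu\inv=\gamma(h)$; restricting to the subalgebra $\k\langle u\rangle\cong\k[\BZ/n]$ realises the decomposition $V=\bigoplus_k V_k$ as its isotypic pieces, so equidimensionality is exactly the assertion that $V$ is \emph{free} as a $\k[\BZ/n]$-module. I would derive this freeness from a Nichols--Zoeller-type argument, exhibiting $\k[\BZ/n]$ as a Hopf subalgebra inside an appropriate finite-dimensional Hopf algebra acting on $V$ and built from the group-like datum of $\b$. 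The delicate point is precisely that $\gamma=R(\b)$ is only an \emph{algebra} automorphism and not a coalgebra map, so the naive crossed product $H\rtimes\langle u\rangle$ need not be a Hopf algebra; the correct Hopf-algebraic container must be manufactured from $\b\in G(H^*)$ itself (equivalently from the Hopf subalgebra $\k[\langle\b\rangle]\subseteq H^*$ and its dual quotient of $H$), and matching the freeness over that object with the $T$-eigenspace decomposition is where the real work lies.
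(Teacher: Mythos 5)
Your first two paragraphs are correct and follow the same route as the paper: passing to the twisted-module picture, observing $T^n\in\End_H(V)$, and normalising so that $T^n=\id$ (your handling of the possible unipotent part of $T^n$ in the local ring $\End_H(V)$ is in fact slightly more careful than the paper's own wording, which asserts $\eta^n=c\,\id_V$ outright). But the proof is not complete: the freeness of $V$ over $\k[\BZ/n]$, which you yourself flag as ``where the real work lies,'' is exactly the content of the lemma, and you do not supply it. Your worry about the crossed product $H\rtimes\langle u\rangle$ is legitimate --- $R(\b)$ is only an algebra automorphism --- but the conclusion to draw is not that some cleverer Hopf-algebraic container must be manufactured; the resolution is to dualise the module structure rather than enlarge the algebra, and this device is absent from your proposal.

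Here is how the paper closes the gap. The $H$-action on $V$ dualises to a right $H^*$-comodule structure $\rho: V \to V\o H^*$, $\rho(v)=\sum v_0\o v_1$, determined by $hv=\sum v_0\,v_1(h)$ for all $h\in H$. Declare $\b v=\eta(v)$, where $\eta$ is the normalised isomorphism with $\eta^n=\id$; this makes $V$ a module over $\k[\b]\cong\k[\BZ/n]$, which is a genuine Hopf subalgebra of $H^*$ since $\b$ is group-like. The intertwining relation $\eta(R(\b)(h)v)=h\eta(v)$, rewritten using $R(\b)(h)=\b(h_1)h_2$, says precisely that
$$
\rho(\b v)=\sum \b v_0 \o \b * v_1\,,
$$
i.e.\ that $\rho$ is compatible with the $\k[\b]$-action in the sense of relative Hopf modules, so that $V$ is an object of $\ld{\k[\b]}\MM^{H^*}$. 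The Nichols--Zoeller theorem, in its relative-Hopf-module form, states that every object of this category is free as a $\k[\b]$-module; hence $n=\dim\k[\b]$ divides $\dim V$. Note that once freeness is in hand, no eigenspace or Fourier analysis is needed: your spaces $V_k$ are just the isotypic components of a free $\k[\BZ/n]$-module and are automatically equidimensional. So the missing step is not a verification you postponed; it is the central idea of the proof, namely trading the algebra automorphism $R(\b)$ for comodule compatibility over the Hopf subalgebra $\k[\b]\subseteq H^*$.
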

\begin{proof}
  Note that $\k_\b \o V \cong \ld{ R(\b)} V$ as
  $H$-modules.
  Let $n=\ord(\b)$ and
  $\eta: \ld{ R(\b)} V \to  V$ an isomorphism of $H$-modules.
  Since $R(\b)^n=\id_H$, $R(\b)$ is diagonalizable.
  Suppose $h \in H$ is an eigenvector of $R(\b)$. Then $R(\b)(h)=\w h$ for some $n$-th root of
  unity $\w \in \k$, and
  $$
  \w\eta(h v)= \eta(R(\b)(h) v) = h\eta(v)
  $$
  for all $v \in V$. Thus, $\eta^n$ is an $H$-module automorphism on
  $V$. Since $V$ is finite-dimensional and indecomposable,
  $\End_H(V)$ is a finite-dimensional local algebra over $\k$ (cf. \cite{PierceBook}).
  Since $\k$ is algebraically closed, $\eta^n= c \cdot \id_V$ for some $c \in
  \k$. By dividing $\eta$ with an $n$-th root of $c$,
  one may assume $\eta^n=\id_V$. Then $V$ is a left $\k[\b]$-module with the action given by
  $$
  \b v  = \eta(v), \quad v \in V.
  $$
  Define the right $H^*$-comodule structure $\rho: V \to V \o H^*$, $\rho(v)=\sum
  v_0 \o v_1$ by the equation
  $$
  hv =\sum v_0 v_1(h) \quad \text{for all } h \in H\,.
  $$
  It is straightforward to check that  $\rho(\b v) = \b \rho(v)$ for $v \in V$, and so $V \in \ld
  {\k[\b]}\MM^{H^*}$. By the Nichols-Zoeller Theorem, $V$ is a free
  $\k[\b]$-module. In particular, $\ord(\b)$ divides $\dim V$.
\end{proof}

\section{Non-semisimple Hopf algebras of dimension $pq$}
\emph{Throughout the remaining discussion, we will assume $H$ to be
a non-semisimple Hopf algebra over $\k$ of dimension $pq$, where $p,
q$ are primes and $2 < p < q$. The antipode of $H$ will continue to
be denoted by $S$.} By \cite{LaRa88}, $H^*$ is also a non-semisimple
Hopf algebra of dimension $pq$ with antipode $S^*$.

In this section, we will obtain a lower bound for the dimensions of
certain indecomposable $H$-modules $V$ which satisfy $V \cong \k_\b
\o V\bidu$ for some $\b \in G(H^*)$ (\nmref{c:dim}). We begin with
the following lemma:
\begin{lem}\label{l:basic} $H, p, q, S$ as above. Then:
  \enumeri{
    \item $H$ or $H^*$ is not unimodular.
    \item $|G(H)|=1$ or $p$.
    \item $\ord S^2 = 2p$.
    \item Let $C$ be a subcoalgebra of $H$ invariant under ${S}^{2p}$ and $C \nsubseteq \k
    G(H)$. Then ${S}^{2p}|_C \ne \id_C$.
    \item Let $V$ be an indecomposable $H$-module with $\dim V >
    1$. Then there exist $h \in H, v \in V$ such that $hv \ne
    S^{2p}(h)v$.
    \item There exists a simple $H$-module $V$ such that
    $$
    \dim V >1 \text{ and } \dim P(V) \ge 2\dim V.
    $$
  }
\end{lem}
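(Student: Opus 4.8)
The six statements fall into an elementary parity fact (i), the antipode-order package (ii)--(iv) which is the real content, and the two consequences (v)--(vi); my plan is to settle (i) by a trace argument, to pin down $\ord S^2$ from Radford's formula \eqref{eq:Rad_ant} together with the Nichols--Zoeller theorem, and then to read (iv)--(vi) off the order computation. For (i) I argue by contradiction: if $H$ and $H^*$ are both unimodular then $\a=\e$ and $g=1$, so \eqref{eq:Rad_ant} degenerates to $S^4=\id$. Thus $S^2$ is an involution, and $S^2\ne\id$ because $H$ is not semisimple, so $S^2$ has both a $+1$- and a $-1$-eigenspace. Since $H$ is neither semisimple nor cosemisimple, $\Tr(S^2)=0$ by \cite{LaRa88}; hence the two eigenspaces have equal dimension and $\dim H$ is even, contradicting $\dim H=pq$. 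The same argument applied to $H^*$ gives the stated dichotomy.

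The heart is (ii)--(iii). For (ii), $\k G(H)$ is a Hopf subalgebra, so Nichols--Zoeller gives $|G(H)|\in\{1,p,q,pq\}$, and $pq$ is excluded because then $H=\k G(H)$ would be a semisimple group algebra. For (iii) I rewrite \eqref{eq:Rad_ant} as $S^4=\operatorname{ad}_g\circ\Phi$, with $\Phi(h)=\a\rightharpoonup h\leftharpoonup\a\inv$ and $\operatorname{ad}_g(h)=ghg\inv$; as $L(\a)$ and $R(\a\inv)$ are commuting algebra automorphisms fixing $g$ up to a scalar, $\operatorname{ad}_g$ and $\Phi$ commute, with $\ord\Phi\mid\ord\a$ and $\ord\operatorname{ad}_g\mid\ord g$. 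Once one knows $|G(H)|,|G(H^*)|\in\{1,p\}$ it follows that $\ord S^4\mid p$; the value $1$ is impossible exactly as in (i) (an $S^4=\id$ forces $\dim H$ even), so $\ord S^4=p$ and $\ord S^2\in\{p,2p\}$. The main obstacle of the whole lemma is concentrated here, in the two remaining points: (a) excluding $|G(H)|=|G(H^*)|=q$, and (b) ruling out the odd value $\ord S^2=p$. Point (b) genuinely uses $\dim H=pq$ with $p\ne q$ — an odd $\ord S^2$ does occur for the Taft algebras of dimension $n^2$ — so no purely formal argument can suffice. I expect to resolve both by analysing the action of $S^2$ on the coradical of $H$: since $S^2$ fixes $\k G(H)$ pointwise, its whole order is carried by the simple subcoalgebras of dimension $\ge 4$, and a Frobenius--Schur/Nichols--Zoeller study of these (built on \eqref{eq:Rad_ant} and the freeness of $H$ over $\k G(H)$) should force the relevant group orders to equal the smaller prime $p$ and produce an eigenvalue of $S^2$ of even order, giving $\ord S^2=2p$.

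Granting the order computation, (iv) is immediate: given $C\nsubseteq\k G(H)$ invariant under $S^{2p}$ with $S^{2p}|_C=\id$, I replace $C$ by $C'=\sum_{i=0}^{p-1}S^{2i}(C)$, which is $S^2$-invariant, still not contained in $\k G(H)$, and still satisfies $S^{2p}|_{C'}=\id$; then $\ord(S^2|_{C'})\mid p$ is odd, contradicting the fact (from the analysis above) that the order-$2$ part of $S^2$ is supported off $\k G(H)$. Statement (v) then follows by dualising. For an indecomposable $V$ with $\dim V>1$, its coefficient coalgebra $C\subseteq H^*$ is not contained in $\k G(H^*)$, for otherwise $V$ would be a direct sum of $1$-dimensional modules and could not be indecomposable of dimension $>1$. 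If $hv=S^{2p}(h)v$ held for all $h,v$, the dual statement would read $(S^*)^{2p}|_C=\id$, which contradicts (iv) applied to $H^*$ — again non-semisimple of dimension $pq$ with $\ord (S^*)^2=2p$.

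Finally, for (vi): since $H$ is not semisimple, some simple $V_0$ is non-projective, whence $\dim P(V_0)\ge 2\dim V_0$ by Lemma~\ref{p:nakayama perm}; if $\dim V_0>1$ we are done, so I assume every non-projective simple is $1$-dimensional. Then the $1$-dimensional simples $\k_\b$ $(\b\in G(H^*))$ are exactly the non-projective simples and carry the whole non-semisimple part, while every simple of dimension $>1$ is projective and forms its own matrix block. Tensoring by the $\k_\b$ permutes $\Irr(H)$, so all $\dim P(\k_\b)$ coincide and, by Lemma~\ref{l:div}, $p$ divides $\sum_{\dim V>1}(\dim V)^2$. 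Feeding this divisibility, the identity $\dim H=\sum_V(\dim V)(\dim P(V))=pq$, and the constraints $|G(H)|,|G(H^*)|\in\{1,p\}$ into a short numerical analysis — closed off, in the degenerate low-grouplike cases, by the nontrivial involution $S^{2p}$ of (iii)--(v) and by $\Tr(S^2)=0$ — produces a contradiction, and hence the desired simple module of dimension $>1$ with $\dim P(V)\ge 2\dim V$.
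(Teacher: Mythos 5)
Your parts (i) and (v) are sound: (i) is a correct re-derivation of the cited result of Larson--Radford (\cite{LaRad95}), and (v) is essentially the paper's own duality argument, reducing to (iv) applied to $H^*$. The decisive gap is in (ii)--(iii), exactly where you yourself locate ``the main obstacle'': excluding $|G(H)|=q$ and excluding $\ord S^2=p$. Your proposal never proves either one; the sentence ``I expect to resolve both by analysing the action of $S^2$ on the coradical \dots should force \dots'' is a plan, not an argument, and it names no mechanism that actually performs these exclusions. In the paper both facts are imported from \cite{Ng02} (Proposition 5.2 and Theorem 5.4); the second supplies the trace identity $\Tr(S^{2p})=p^2d$ with $d$ odd, so that $S^{2p}=\id$ would give $pq=p^2d$, i.e.\ $q=pd$, contradicting primality. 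Nothing in your sketch substitutes for this trace identity, and since (as you correctly note) odd order of $S^2$ really occurs for Taft algebras, no soft or purely formal argument can close this hole.

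Two further steps fail even if one grants (ii)--(iii). In (iv), your averaging of $C$ to the $S^2$-invariant subcoalgebra $C'$ is fine, but the concluding contradiction is not: even if ``the order-$2$ part of $S^2$ is supported off $\k G(H)$'' (itself unproven), the existence of one subcoalgebra off $\k G(H)$ on which $S^2$ has odd order contradicts nothing, since the even-order eigenvalues could sit on a different subcoalgebra; reading the claim strongly enough to force the contradiction makes it equivalent to (iv) itself, i.e.\ circular. The paper's mechanism is different and essential: the subalgebra $B$ generated by $C$ is a Hopf subalgebra, so $\dim B\in\{1,p,q,pq\}$ by Nichols--Zoeller; by Zhu's theorem (\cite{Zhu}) the prime-dimension cases make $B$ a group algebra, forcing $C\subseteq\k G(H)$; hence $B=H$, and since $S^{2p}$ is an algebra map equal to the identity on the generating set $C$, it follows that $S^{2p}=\id_H$, contradicting (iii). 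In (vi), your ``short numerical analysis'' cannot close: under the negation every simple of dimension $>1$ is projective, so the Frobenius count reads $pq=|G(H^*)|\,D_0+\sum_{\dim V>1}(\dim V)^2$, which admits numerical solutions (and your divisibility claim already fails when $G(H^*)$ is trivial), so arithmetic alone yields no contradiction. The missing input is structural, namely \cite[Lemma 2.3]{EG03}: dimension $>1$ simples, being projective and injective, cannot occur as composition factors of the indecomposable $P(\k)$, so all composition factors of $P(\k)$ are $1$-dimensional, whence $P(\k)$ is a direct sum of $1$-dimensional modules and $P(\k)=\k$ --- contradicting that $\k$ is not projective over a non-semisimple $H$.
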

\begin{proof} (i) Since $\dim H$ is odd, it follows from \cite[Theorem 2.2]{LaRad95} that $H$ or $H^*$ is
not unimodular.  \smallskip \\
(ii) follow immediately from \cite[Proposition 5.2]{Ng02}. \smallskip \\
(iii) By \cite[Proposition 5.2 and Theorem 5.4]{Ng02}, $\ord S^4 =p$
and $\Tr(S^{2p})=p^2 d$ for some odd integer $d$. If $S^{2p}=\id$,
then $pd=q$ which contradicts that $p,q$ are distinct primes.
Therefore, $\ord S^2 = 2p$ and  this proves (iii).
\smallskip \\
 (iv) Let $B$ be the  subalgebra of $H$ generated by $C$.
Then $B$ is a bialgebra and hence a Hopf subalgebra of $H$. In
particular, $\dim B =1, p, q$ or $pq$. If $\dim B\in \{1, p, q\}$, then $B$
is a group algebra (cf. \cite{Zhu}) and hence $C \subseteq B
\subseteq \k G(H)$. The assumption of $C$ forces  $\dim B=pq$ or
equivalently $B=H$. If ${S}^{2p}|_C = \id_C$, then ${S}^{2p}|_B
=\id_B$ and this contradicts (iii). Therefore, ${S}^{2p}|_C \ne
\id_C$.
\smallskip \\
(v) Suppose that $hv = S^{2p}(h) v$ for all $h \in H$ and $v \in V$.
Let $I \subseteq H$ be the annihilator of $V$, and $\pi : H \to H/I$
the natural surjection. Then $S^{2p}(h)-h \in I$ for $h \in H$, and
so $\pi \circ S^{2p} = \pi$. Hence we have the commutative diagram
of coalgebra maps:
$$
 \xymatrix{
  & H^* \ar[dd]^-{(S^{2p})^*=(S^*)^{2p}\,.}\\
 (H/I)^* \ar[rd]_-{\pi^*} \ar[ru]^-{\pi^*} & \\
    & H^*
 }
 $$
Let $C =\pi^*(H/I)^*$. Then the commutative diagram implies that
$(S^*)^{2p}(C) = C$ and $(S^*)^{2p}|_C =\id_C$. Since $V$ is an
indecomposable $H/I$-module and $\dim V
>1$, $H/I$ is not a commutative semisimple algebra. Hence $C$ is not cosemisimple
cocommutative. In particular, $C \nsubseteq \k G(H^*)$. However,
this contradicts (iv).\smallskip\\
(vi) Suppose the statement is false. Then, by Lemma \ref{p:nakayama
perm}, every simple $H$-module $V$ with $\dim V >1$ is projective.
Thus the composition factors of $P(\k)$ are all 1-dimensional. By
\cite[Lemma 2.3]{EG03}, $P(\k)$ is a direct sum of 1-dimensional
$H$-modules. Hence $\k=P(\k)$. However, this contradicts \cite[Lemma
2.2]{EG03} that $\k$ is not projective.
\end{proof}

\begin{lem}\label{l:basicpq}
  Let $\sigma = S^2 \circ R(\b)$ for some $\b \in
  G(H^*)$, and let  $V \in \C{H}$ be indecomposable with $\dim V > 1$ and $V
  \cong \ld{\sigma} V$. Then:
  \enumeri{
  \item For any isomorphism  $\phi :
  V \to \ld{\sigma} V$ of $H$-modules, $\phi^p \ne \id$.
   \item There exists an $H$-module
  isomorphism $\psi: V \to \ld{\sigma} V$ such that $\psi^{2p}=\id$
  and $\Tr(\psi^p)$ is a non-negative integer.}
\end{lem}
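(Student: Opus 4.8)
The plan is to begin by recording two consequences of the order computations in Lemma \ref{l:basic}. Since $S^2$ commutes with $R(\b)$ and $\ord S^2 = 2p$ by Lemma \ref{l:basic}(iii), while $\ord(\b)$ divides $|G(H^*)| \in \{1,p\}$ (this is Lemma \ref{l:basic}(ii) applied to the non-semisimple Hopf algebra $H^*$), we have $\b^p = \e$ and hence $R(\b)^p = R(\b^p) = \id_H$, so that
\[
\sigma^p = (S^2)^p\circ R(\b)^p = S^{2p}, \qquad \sigma^{2p} = S^{4p}\circ R(\b^{2p}) = \id_H .
\]
Throughout, an isomorphism $\phi : V \to \ld{\sigma} V$ is a linear automorphism of the underlying space $V$ satisfying $\phi(hv)=\sigma(h)\phi(v)$ for all $h \in H$, $v \in V$; iterating, $\phi^k(hv)=\sigma^k(h)\phi^k(v)$, so each power $\phi^k : V \to \ld{\sigma^k}V$ is again an $H$-module isomorphism.

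For (i), I would argue by contradiction. Suppose some such $\phi$ satisfies $\phi^p = \id_V$. Because $\sigma^p = S^{2p}$, the relation $\phi^p(hv) = \sigma^p(h)\phi^p(v)$ collapses to $hv = S^{2p}(h)v$ for all $h \in H$ and $v \in V$. This contradicts Lemma \ref{l:basic}(v), since $V$ is indecomposable with $\dim V > 1$. Hence $\phi^p \ne \id_V$.

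For (ii), I would start from any isomorphism $\phi : V \to \ld{\sigma} V$. As $\sigma^{2p}=\id_H$, the power $\phi^{2p}$ lies in $\End_H(V)$, which is a finite-dimensional local $\k$-algebra because $V$ is indecomposable. The naive attempt to rescale $\phi$ so that $\phi^{2p}=\id$ fails, since $\phi^{2p}$ is merely a unit of this (possibly noncommutative) local algebra and need not be a scalar; overcoming this is the main obstacle. The device is to replace $\phi$ by its semisimple part $\psi_0$ in the multiplicative Jordan decomposition of the automorphism $\phi$ of $V$. Conjugation by $\phi$ implements the finite-order automorphism $\sigma$ on the image of $H$ in $\End_\k(V)$; since that restricted conjugation is semisimple (order dividing $2p$ in characteristic zero) and taking Jordan parts commutes with restriction to invariant subspaces, conjugation by $\psi_0$ implements the same automorphism $\sigma$. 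Thus $\psi_0 : V \to \ld{\sigma} V$ is again an $H$-module isomorphism, now semisimple, and $\psi_0^{2p}$ is a semisimple operator lying in the local algebra $\End_H(V)$; writing it as $c\cdot\id + j$ with $c \in \k$ and $j$ in the (nilpotent) radical, its own Jordan decomposition forces $j=0$, so $\psi_0^{2p} = c\cdot\id_V$ with $c \ne 0$. Choosing $\gamma$ with $\gamma^{2p}=c$ and setting $\psi := \gamma\inv\psi_0$ yields an $H$-module isomorphism $V \to \ld{\sigma} V$ with $\psi^{2p}=\id$.

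Finally, $\psi^{2p}=\id$ forces $\psi$ to be diagonalizable with eigenvalues among the $2p$-th roots of unity, so every eigenvalue of $\psi^p$ lies in $\{\pm1\}$ and $\Tr(\psi^p)$ is an integer, namely the difference of the two eigenspace dimensions. Since $p$ is odd, $-\psi$ is also an isomorphism $V \to \ld{\sigma} V$ with $(-\psi)^{2p}=\id$ and $(-\psi)^p = -\psi^p$; replacing $\psi$ by $-\psi$ when necessary makes $\Tr(\psi^p)$ non-negative, which completes the argument. Everything outside the Jordan-decomposition step is direct computation, so I expect that step—showing the semisimple part of $\phi$ stays an intertwiner $V \to \ld{\sigma}V$—to be the delicate point.
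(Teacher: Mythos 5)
Your argument is correct, and its overall route is the paper's, with one genuine addition. Part (i) coincides with the paper's proof essentially verbatim: $\b^p=\e$ (since $|G(H^*)|\in\{1,p\}$) together with the fact that $R(\b)$ commutes with $S^2$ gives $\sigma^p=S^{2p}$, so $\phi^p=\id$ would force $hv=S^{2p}(h)v$ for all $h\in H$, $v\in V$, contradicting Lemma \ref{l:basic}(v). In part (ii) you follow the paper's skeleton---pass to $\phi^{2p}\in\End_H(V)$ using $\sigma^{2p}=\id$, invoke locality of $\End_H(V)$, rescale by a $2p$-th root of the resulting scalar, and replace $\psi$ by $-\psi$ (using that $p$ is odd) if $\Tr(\psi^p)<0$---but you insert a step the paper does not have. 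The paper asserts outright that locality of $\End_H(V)$ and algebraic closedness of $\k$ give $\phi^{2p}=c\,\id_V$; strictly speaking, locality only yields $\phi^{2p}=c\,\id_V+j$ with $j$ in the nilpotent radical, so your observation that the naive rescaling is insufficient identifies a real ellipsis in the published proof (the same ellipsis occurs in the proof of Lemma \ref{l:div}, where $\eta^n=c\cdot\id_V$ is asserted in the same way). Your Jordan-decomposition patch is sound: $\mathrm{Ad}(\phi)$ stabilizes the image $A$ of $H$ in $\End_\k(V)$ and restricts there to an automorphism of order dividing $2p$, hence semisimple; since Jordan parts are polynomials in the operator, they stabilize $A$ and restrict to the Jordan parts of the restriction, so $\mathrm{Ad}(\psi_0)|_A=\mathrm{Ad}(\phi)|_A$, making the semisimple part $\psi_0$ an intertwiner whose $2p$-th power is a semisimple unit of the local ring $\End_H(V)=\k\,\id_V\oplus J$ and therefore a genuine nonzero scalar. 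A marginally lighter patch, should you want one: write $\phi^{2p}=c(\id_V+n)$ with $n\in J$ nilpotent, extract a unipotent $2p$-th root $u$ of $\id_V+n$ via the (terminating) binomial series---$u$ is a polynomial in $n$, hence lies in $\End_H(V)$ and commutes with $\phi$---and replace $\phi$ by $\phi u^{-1}$ before rescaling; this stays entirely inside the paper's toolkit. Either way, the endgame (eigenvalues of $\psi^p$ equal to $\pm1$, hence integer trace, sign fixed by flipping $\psi$) is exactly the paper's.
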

\begin{proof}
   (i) By Lemma \ref{l:basic}(ii), $\s^p = S^{2p}$.
   Suppose  there exists an $H$-module map $\phi: V \to \ld{\sigma}V$ such
  that $\phi^p=\id$.
  Then  for all $h \in H$ and $v \in V$,
  $$
  hv=\phi^p(hv) = \sigma^p(h)\phi^p(v)=  \sigma^p(h)v = S^{2p}(h) v\,.
  $$
  This contradicts Lemma \ref{l:basic}(v). \smallskip\\
  (ii) Let $\phi: V \to \ld\s V$ be an isomorphism of $H$-modules.
 Then  for all $h \in H$ and $v \in V$, we have
  $$
  hv=\phi^{2p}(hv) = \sigma^{2p}(h)\phi^{2p}(v)\,.
  $$
  By Lemma \ref{l:basic}(iii), $\s^{2p}=S^{4p}=\id$, so $\phi^{2p}$ is
  an $H$-module automorphism on $V$. Since $V$ is indecomposable, $\End_H(V)$ is a
  finite-dimensional local $\k$-algebra (cf. \cite{PierceBook}). In particular, $\phi^{2p}=
  c \id_V$ for some non-zero $c \in \k$. Let $t \in \k$
  be a $2p$-th root of $c$ and $\ol \phi = t\inv\phi$. Then $\ol \phi$ is
  also an isomorphism of $H$-module from $V$ to $\ld{\s} V$ and
  $(\ol \phi)^{2p}=\id$. In particular, $\Tr(\ol \phi^p)$ is an
  integer. Set $\psi = - \ol \phi$ if $\Tr(\ol \phi^p)< 0$, and $\psi=\ol \phi$
   otherwise. Then $\psi$ is a required isomorphism.
\end{proof}

We close this section with following corollary which is an enhanced
result of \cite[Lemma 2.11]{EG03}.

\begin{cor}\nmlabel{Corollary}{c:dim} Let $V$  an indecomposable $H$-module of odd dimension which
satisfies one of the following conditions:
\begin{enumerate}
\item[\rm(I)] $V \cong \k_{\b}\o V\bidu$ for some non-trivial element $\b \in
G(H^*)$,
\item[\rm(II)] $V\cong V\bidu$ and $V$ is projective.
\end{enumerate}
Then $\dim(V) \ge p+2$.
  \end{cor}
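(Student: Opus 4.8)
The plan is to handle both cases by a single eigenvalue computation. I set $\s = S^2 \circ R(\b)$, taking $\b$ to be the given non-trivial group-like element in case (I) and $\b = \e$ in case (II), so that $\s = S^2$ there since $R(\e) = \id$. The preliminary isomorphism $\k_\b \o V\bidu \cong \ld{\s}V$ shows that hypothesis (I) is precisely $V \cong \ld{\s}V$, while in (II) the identities $V \cong V\bidu \cong \ld{S^2}V = \ld{\s}V$ give the same conclusion. In either case $\dim V > 1$: a one-dimensional $V = \k_\gamma$ would force $\b = \e$ in (I), and in (II) would make $\k$ a projective module, both impossible. I may therefore invoke Lemma \ref{l:basicpq}(ii) to obtain an $H$-isomorphism $\psi : V \to \ld{\s}V$ with $\psi^{2p} = \id$ and $\Tr(\psi^p) \ge 0$.

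The crucial extra input is that $\Tr(\psi) = 0$: in case (I) this is Lemma \ref{l:trace0} applied to $\tau = \psi$, and in case (II) it is Lemma \ref{l:2ndtrace0} applied to $\phi = \psi$. Since $\psi^{2p} = \id$ and $\operatorname{char}\k = 0$, the map $\psi$ is diagonalisable with eigenvalues among the $2p$-th roots of unity; as $p$ is odd these are exactly the numbers $\pm\w^j$, where $\w$ is a primitive $p$-th root of unity and $0 \le j \le p-1$. Writing $a_j$ and $b_j$ for the multiplicities of $\w^j$ and $-\w^j$, I have $\Tr(\psi) = \sum_j (a_j - b_j)\w^j = 0$. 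Because the only $\BQ$-linear relation among the $p$-th roots of unity is the vanishing of their sum, it follows that $a_j - b_j = c$ is independent of $j$.

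Now $\Tr(\psi^p) = \sum_j (a_j - b_j) = pc$, so the non-negativity from Lemma \ref{l:basicpq}(ii) forces $c \ge 0$. Setting $B = \sum_j b_j$, and using $a_j = b_j + c$, I get $\dim V = \sum_j (a_j + b_j) = 2B + pc$. Since $\dim V$ is odd and $p$ is odd, $c$ must be odd, hence $c \ge 1$. Moreover $\psi^p \ne \id$ by Lemma \ref{l:basicpq}(i), and the eigenvalues of $\psi^p$ are $+1$ (with total multiplicity $\sum_j a_j$) and $-1$ (with total multiplicity $B$); so $B \ge 1$. Therefore $\dim V = 2B + pc \ge 2 + p$, as claimed. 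The main obstacle is the bookkeeping of the last two paragraphs, and in particular the delicate use of the parity of $\dim V$: it is exactly the oddness that upgrades $c \ge 0$ to $c \ge 1$ and so separates the sharp bound $\dim V \ge p+2$ from the weaker estimate $\dim V \ge p$ that the trace relations alone would give.
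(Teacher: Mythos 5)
Your proof is correct and takes essentially the same approach as the paper: both reduce (I) and (II) to $V\cong\ld{\sigma}V$, invoke Lemma \ref{l:basicpq} to produce an isomorphism $\psi$ with $\psi^{2p}=\id$ and $\Tr(\psi^p)\ge 0$, and apply Lemmas \ref{l:trace0} and \ref{l:2ndtrace0} to obtain $\Tr(\psi)=0$. The only difference is cosmetic: you carry out explicitly the eigenvalue bookkeeping (constancy of $a_j-b_j=c$, $\Tr(\psi^p)=pc$ with $c$ odd) that the paper outsources to \cite[Lemma 1.3]{Ng02}, and you use Lemma \ref{l:basicpq}(i) to force $B\ge 1$ and conclude $\dim V\ge p+2$ in one step, where the paper first gets $\dim V\ge p$ and then excludes equality.
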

\begin{proof}
  Notice that any 1-dimensional $H$-module does not satisfy (I) or
  (II). Therefore, if $V$ satisfies condition (I) or (II), then $\dim V >1$.
  Since $V \cong  \k_\b \o V\bidu$, $V \cong \ld{\sigma}V$ where $\s =S^2 \circ R(\b)$. By
  Lemma \ref{l:basicpq}, there exists an $H$-module isomorphism $\psi: V \to \ld \s V$ such that
  $\psi^{2p}=\id_V$ and $\Tr(\psi^p) \ge 0$. Since $V$ satisfies (I) or (II), it follows from
  Lemmas \ref{l:trace0} and \ref{l:2ndtrace0} that $\Tr(\psi)=0$. By a linear algebra
  argument, $\Tr(\psi^p)=pd$ for some non-negative odd integer $d$ (cf.
  \cite[Lemma 1.3]{Ng02}). This forces
  $\dim V \ge p$. Suppose
  $\dim V=p$. Then $\Tr(\psi^p)=p$ and hence $\psi^p=\id_V$.
  However, this contradicts Lemma \ref{l:basicpq} (i). Therefore,
  $\dim V>p$.
\end{proof}

\section{Orbits of simple modules}\label{s:2-orbits}
\emph{We continue to assume that $H$ is a non-semisimple Hopf
algebra of dimension $pq$ with $p, q$ prime and $2 < p< q$. By
duality and Lemma \ref{l:basic}(i), we can further assume that the
distinguished group-like element $\a \in H^*$ is non-trivial}.
Hence, by Lemma \ref{l:basic}(ii),   $\langle \a \rangle=G(H^*)$ and
$\ord \a = |G(H^*)|=p$.

Let us denote $[V]$ for the isomorphism class of an $H$-module $V$,
and $\Irr(H)$ the set of all isomorphism classes of simple $H$-modules.
One can define a left action of $G(H^*)$ on $\Irr(H)$ as follows:
$\b[V]= [\k_\b\o V]$ for $[V]\in \Irr(H)$. A right action of
$G(H^*)$ on $\Irr(H)$ can be defined similarly. The orbits of $[V]
\in \Irr(H)$ under these $G(H^*)$-actions are respectively denoted
by $O_l(V)$ and $O_r(V)$. A simple $H$-module $V$ is called
\emph{regular} if $O_l(V)=O_r(V)$. It is easy to see that $\k_\b$ is
left unstable and regular for $\b \in G(H^*)$.

 Following the terminology of \cite{EG03}, a simple
$H$-module $V$ is called \emph{left stable} if $\k_\a \o V \cong V$,
i.e. $O_l(V)$ is a singleton, or otherwise \emph{left unstable}. We
can define \emph{right stable} (resp. \emph{right unstable})
$H$-modules similarly. The set of all left (resp. right)
$G(H^*)$-orbits in $\Irr(H)$ is denoted by $\OO_l(H)$ (resp.
$\OO_r(H)$). In this section, we obtain in Corollary \ref{c:ge3}
that $|\OO_l(H)| \ge 3$. We also show in Proposition \ref{p:ge4}
that if every simple $H$-module is regular and left unstable, then
$|\OO_l(H)|\ge 4$.

\begin{remark} \label{r:3.1}
  Since
  $$
  O_r(V\du)=\{[W\du] \mid [W] \in O_l(V)\}
  $$
  for $[V] \in \Irr(H)$, we find $|\OO_l(H)|=|\OO_r(H)|$. Moreover, $P(\k_\b \o V) \cong \k_\b \o P(V)$
and $P(V\o \k_\b) \cong P(V)\o
  \k_\b$. Therefore,
  $\dim P(W) = \dim P(V)$ for all $[W]\in O_l(V) \cup O_r(V)$. In particular, $\dim P(\k)=\dim P(\k_\b)$
  for all $\b \in G(H^*)$.
\end{remark}

There is a lower bound for the dimensions of the left stable
indecomposable projective $H$-modules.
\begin{lem} \label{l:lb}
  If $V$ is a left stable simple $H$-module, then $\dim P(V) \ge 2p$.
\end{lem}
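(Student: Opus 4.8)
The plan is to first establish that $p \mid \dim P(V)$, and then to eliminate the single remaining possibility $\dim P(V) = p$.

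First I would transfer the left-stability hypothesis $\k_\a \o V \cong V$ to the projective cover. By Remark \ref{r:3.1} we have $P(V) \cong P(\k_\a \o V) \cong \k_\a \o P(V)$. Since $V$ is simple, its projective cover $P(V)$ is indecomposable, and recalling that $\ord \a = p$, Lemma \ref{l:div} applied to the group-like element $\a$ and the indecomposable module $P(V)$ yields $p \mid \dim P(V)$. Applying the same lemma directly to $V$ gives $p \mid \dim V$, so in particular $\dim V \ge p$.

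It then remains to rule out $\dim P(V) = p$. Suppose to the contrary that $\dim P(V) = p$. Since $V \cong \head(P(V))$ is a quotient of $P(V)$, we have $\dim V \le \dim P(V) = p$, and combined with $p \mid \dim V$ this forces $\dim V = p = \dim P(V)$; hence $P(V) = V$ and $V$ is projective. Now the projective case of Lemma \ref{p:nakayama perm} gives $V \cong \k_\a \o V\bidu$. Because $\a$ is non-trivial, $V$ is indecomposable, and $\dim V = p$ is odd, the module $V$ satisfies hypothesis (I) of \nmref{c:dim} with $\b = \a$, whence $\dim V \ge p+2$, contradicting $\dim V = p$.

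Therefore $\dim P(V) \ne p$, and since $p \mid \dim P(V)$ the smallest admissible value is $2p$, giving $\dim P(V) \ge 2p$. The whole weight of the argument rests on excluding a projective left stable simple module of dimension exactly $p$; the divisibility bookkeeping is routine, and the one genuinely non-trivial input is \nmref{c:dim}, which already absorbs the trace-vanishing obstructions of Lemmas \ref{l:trace0} and \ref{l:2ndtrace0} together with Lemma \ref{l:basicpq}. The only point demanding care is to confirm that the odd-dimension hypothesis of \nmref{c:dim} is satisfied precisely in the case $\dim V = p$ where the corollary is invoked; beyond that I anticipate no serious obstacle.
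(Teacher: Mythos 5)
Your proof is correct, and it differs from the paper's in a small but genuine way. The paper never proves $p \mid \dim P(V)$; it applies Lemma \ref{l:div} only to $V$ itself (getting $\dim V = np$) and then splits according to the dichotomy of Lemma \ref{p:nakayama perm}: either $\dim P(V) \ge 2\dim V \ge 2p$ and the lemma holds at once, or $\dim P(V) < 2\dim V$, in which case $V$ is projective, $V \cong \k_\a \o V\bidu$, and \nmref{c:dim} excludes $\dim V = p$, so $\dim P(V)=\dim V \ge 2p$. You instead transfer stability to the projective cover via Remark \ref{r:3.1}, apply Lemma \ref{l:div} to the indecomposable module $P(V)$ to get the stronger statement $p \mid \dim P(V)$, and are then left with excluding the single value $\dim P(V) = p$ --- which collapses to the same crux as the paper's second case: a projective simple module of dimension $p$ with $V \cong \k_\a \o V\bidu$, killed by Lemma \ref{p:nakayama perm} together with \nmref{c:dim}. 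What your version buys is the cleaner intermediate fact that $p$ divides $\dim P(V)$ for every left stable simple $V$, which the paper never records and which localizes the problem to one numerical case; what it costs is the extra (easy) input $P(\k_\a \o V) \cong \k_\a \o P(V)$ from Remark \ref{r:3.1}. Both arguments rest on the same three pillars --- Lemma \ref{l:div}, Lemma \ref{p:nakayama perm}, and \nmref{c:dim} --- and your attention to the odd-dimension hypothesis of \nmref{c:dim} is exactly right, since it is satisfied precisely because $p$ is an odd prime.
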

\begin{proof}
 By Lemma \ref{l:div},
 $\dim V =np$ for some positive integer $n$. If $\dim P(V) \ge 2\dim V$ then $\dim P(V) \ge 2p$.
 Now we assume $\dim P(V) < 2\dim V$. By Lemma \ref{p:nakayama perm} that $P(V)= V$ and hence
 $V \cong \k_\a \o V\bidu$.
 By \nmref{c:dim}, $\dim V \ne p$ and hence $\dim P(V)=\dim V \ge 2p$.
\end{proof}

Recall that if $Q$ is a projective $H$-module, then $Q$ is a direct
sum of indecomposable projective $H$-modules. More precisely,
\begin{equation} \label{eq:mult}
 Q \cong \bigoplus_{[V] \in \Irr(H)} N^Q_V \cdot P(V)
\end{equation}
where the multiplicity $N^Q_V=\dim \Hom_H(Q, V)$. Note that if $V,W$
are simple, then
\begin{equation} \label{eq:delta}
\dim \Hom_H(P(V), W)=\delta_{[V], [W]}.
\end{equation}
For all $X,Y
\in \C{H}$, $X \o Q$ and $Q \o Y$ are projective and we have the
natural isomorphisms of $\k$-linear spaces
\begin{equation} \label{eq:hom}
  \Hom_H(X\du \o Q, Y) \cong \Hom_H(Q, X \o Y) \cong \Hom_H(Q \o \ldu Y, X)\,.
\end{equation}

\begin{lem} \label{l:mult}
  For $[V] \in \Irr(H)$ and $\b \in G(H^*)$, we have
 $$
  \dim \Hom_H(P(V) \o \ldu V, \k_\b) \cong  \left\{\begin{array}{ll}
   \displaystyle \delta_{\e, \b} &
   \text{if $V$ is left unstable},\\
   \displaystyle 1 & \text{if  $V$ is left stable}.
  \end{array}\right.
  $$
  \end{lem}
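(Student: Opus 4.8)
The plan is to reduce the computation to the multiplicity formula \eqref{eq:delta} by means of the adjunction isomorphisms collected in \eqref{eq:hom}. Specializing the rightmost isomorphism there to $Q = P(V)$, $X = \k_\b$, and $Y = V$, I would obtain a $\k$-linear isomorphism
$$
\Hom_H(P(V) \o \ldu V, \k_\b) \cong \Hom_H(P(V), \k_\b \o V)\,.
$$
This is the essential move: it replaces the somewhat unwieldy projective module $P(V) \o \ldu V$ in the first argument by the indecomposable projective $P(V)$, putting us exactly in the situation governed by \eqref{eq:delta}, provided the second argument is simple.

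Next I would observe that $\k_\b \o V$ is again a simple $H$-module. Indeed, tensoring with the one-dimensional module $\k_\b$ is an invertible $\k$-linear endofunctor of $\C{H}$, with inverse $\k_{\b\inv} \o (-)$ since $\k_\b \o \k_{\b\inv} \cong \k_\e$, so it carries simple modules to simple modules. Consequently \eqref{eq:delta} applies and yields
$$
\dim \Hom_H(P(V), \k_\b \o V) = \delta_{[V],\,[\k_\b \o V]}\,.
$$
Thus the dimension in question is $1$ precisely when $[\k_\b \o V] = [V]$, i.e. when $\b$ fixes $[V]$ under the left $G(H^*)$-action, and is $0$ otherwise.

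It remains to convert the condition $\b[V] = [V]$ into the dichotomy in the statement, using that $G(H^*) = \langle \a \rangle$ has prime order $p$. The stabilizer of $[V]$ is a subgroup of $G(H^*)$, hence is either trivial or all of $G(H^*)$. If $V$ is left stable, then $\a$ fixes $[V]$, so the stabilizer is all of $G(H^*)$ and $\delta_{[V],\,[\k_\b \o V]} = 1$ for every $\b \in G(H^*)$. If $V$ is left unstable, then $\a$ does not fix $[V]$, so the stabilizer cannot be the whole group and must be trivial, whence $\delta_{[V],\,[\k_\b \o V]} = \delta_{\e,\b}$. Combining the two cases gives the asserted formula.

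I do not anticipate a serious obstacle here. The only two points that require care are choosing the correct specialization of the three-term adjunction \eqref{eq:hom} --- one must pair $P(V)$ with the \emph{right} dual $\ldu V$ so that the middle term is $\k_\b \o V$ rather than $V \o \k_\b$, matching the left action $\b[V] = [\k_\b \o V]$ --- and the elementary orbit--stabilizer observation that a subgroup of a group of prime order is either trivial or the whole group.
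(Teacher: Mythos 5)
Your proposal is correct and follows essentially the same route as the paper's proof, which likewise applies the adjunction \eqref{eq:hom} to identify $\Hom_H(P(V)\o \ldu V, \k_\b)$ with $\Hom_H(P(V), \k_\b\o V)$ and then invokes \eqref{eq:delta}. The only difference is that you spell out the two steps the paper leaves implicit --- the simplicity of $\k_\b\o V$ and the prime-order stabilizer argument behind the stable/unstable dichotomy --- both of which you handle correctly.
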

\begin{proof}
  By \eqref{eq:hom} and \eqref{eq:delta},
  we have
  \begin{eqnarray*}
     \dim \Hom_H(P(V) \o \ldu V, \k_\b)&=&\dim \Hom_H(P(V),  \k_\b \o V) \\
     &=&
  \left\{\begin{array}{ll}
   \delta_{\e, \b} & \text{if } V \text{ is left unstable},\\
   1 & \text{if } V \text{ is left stable}.
  \end{array}\right.\qedhere
  \end{eqnarray*}
  \end{proof}

The following corollary gives a lower bound for the dimension
contributed by a left or right $G(H^*)$-orbit.
\begin{cor} \label{c:orbitcount}
For all simple $H$-module $V$, we have
  $$
  \sum_{[W]\in O(V)} \dim W \dim P(W) \ge p\cdot \dim P(\k)
  $$
  where $O(V)=O_l(V)$ or $O_r(V)$.
\end{cor}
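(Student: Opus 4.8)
The plan is to realize the quantity $\sum_{[W]\in O(V)}\dim W\,\dim P(W)$ as the dimension of a suitable projective module built from the orbit, and then to compare that projective module with copies of $P(\k)$ via the multiplicity formula \eqref{eq:mult}. First I would treat the case $O(V)=O_l(V)$; the case of a right orbit follows by the duality symmetry recorded in Remark \ref{r:3.1} (passing from $V$ to $V\du$ interchanges left and right orbits while preserving $\dim W$ and $\dim P(W)$). So fix a left orbit $O_l(V)=\{[W_1],\dots,[W_k]\}$ and form the projective $H$-module
\[
Q \;=\; \bigoplus_{[W]\in O_l(V)} (\dim W)\cdot\bigl(P(W)\o\ldu W\bigr),
\]
whose dimension is exactly $\sum_{[W]\in O_l(V)} \dim W\,\dim P(W)$, since $\dim(\ldu W)=\dim W$. (Each $P(W)\o\ldu W$ is projective by the discussion preceding \eqref{eq:mult}.)

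Next I would compute, for each $\b\in G(H^*)$, the multiplicity of $\k_\b$ as a quotient of $Q$, i.e.\ $\dim\Hom_H(Q,\k_\b)$. By Lemma \ref{l:mult}, the summand $P(W)\o\ldu W$ contributes $\delta_{\e,\b}$ if $W$ is left unstable and $1$ if $W$ is left stable. The key structural point is that whether a module in the orbit is left stable is an invariant of the whole orbit, and the two regimes behave oppositely: if the common orbit is a singleton (left stable case) then $k=1$ and every $\b$ receives multiplicity $\dim W$; if the orbit is non-trivial (left unstable case) then it is a free $\langle\a\rangle$-orbit of size $k=\ord\a=p$, and only $\b=\e$ receives a contribution, of total weight $\sum_{[W]\in O_l(V)}\dim W$. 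In either regime I would extract the uniform conclusion that
\[
\sum_{\b\in G(H^*)} \dim\Hom_H(Q,\k_\b)\;\ge\; p,
\]
using $|G(H^*)|=p$ in the stable case and $k=p$ (together with $\dim W\ge 1$) in the unstable case.

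From the multiplicity description I would then read off, via \eqref{eq:delta} and \eqref{eq:mult}, how many copies of $P(\k_\b)$ appear in $Q$: namely $Q$ contains $\bigoplus_{\b\in G(H^*)} N^Q_{\k_\b}\cdot P(\k_\b)$ as a direct summand, with $\sum_\b N^Q_{\k_\b}\ge p$. Since $\dim P(\k_\b)=\dim P(\k)$ for every $\b\in G(H^*)$ by Remark \ref{r:3.1}, comparing dimensions gives
\[
\sum_{[W]\in O_l(V)}\dim W\,\dim P(W)\;=\;\dim Q\;\ge\;\sum_{\b\in G(H^*)} N^Q_{\k_\b}\,\dim P(\k_\b)\;\ge\; p\cdot\dim P(\k),
\]
which is the desired bound. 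The step I expect to be the main obstacle is bookkeeping the two stability regimes uniformly so that the single inequality $\sum_\b N^Q_{\k_\b}\ge p$ comes out in both cases; I would want to check carefully that in the unstable case the orbit really has full size $p$ (forced by freeness of the $\langle\a\rangle$-action once the module is not left stable, since $\ord\a=p$ is prime) and that in the stable case the factor $\dim W\ge 1$ summed over the $p$ values of $\b$ supplies the needed $p$. The remaining inequalities are just dimension counts and need no further input.
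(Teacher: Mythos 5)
Your overall strategy is exactly the paper's: use Lemma \ref{l:mult} to locate copies of the projective covers $P(\k_\b)$ inside the projective modules $P(W)\o\ldu W$ built from the orbit, distinguish the stable regime (singleton orbit, every $\b\in G(H^*)$ contributes) from the unstable regime (free orbit of size $\ord\a=p$, only $\b=\e$ contributes), invoke Remark \ref{r:3.1} to equate $\dim P(\k_\b)$ with $\dim P(\k)$, and handle right orbits by duality. The paper runs this count member-by-member on $P(V)\o\ldu V$ rather than aggregating the whole orbit into one module $Q$, but that is only bookkeeping.

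There is, however, one concrete error you must repair: the multiplicity factor $(\dim W)$ in your definition of $Q$. With
$$
Q=\bigoplus_{[W]\in O_l(V)}(\dim W)\cdot\bigl(P(W)\o\ldu W\bigr),
$$
one has $\dim Q=\sum_{[W]}(\dim W)^2\dim P(W)$, not $\sum_{[W]}\dim W\,\dim P(W)$, because $\dim\bigl(P(W)\o\ldu W\bigr)$ already equals $\dim W\,\dim P(W)$ (your own justification ``since $\dim(\ldu W)=\dim W$'' shows you were computing as if the extra factor were absent). Consequently the first equality in your final display is false, and as written your argument only establishes $\sum_{[W]}(\dim W)^2\dim P(W)\ge p\cdot\dim P(\k)$, which is strictly weaker than the corollary whenever $\dim V>1$. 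The fix is simply to delete the factor and set $Q=\bigoplus_{[W]\in O_l(V)}P(W)\o\ldu W$. Then $\dim Q$ is precisely the quantity to be bounded, $Q$ is still projective, and your multiplicity count via Lemma \ref{l:mult}, \eqref{eq:delta} and \eqref{eq:mult} still gives $\sum_{\b\in G(H^*)} N^Q_{\k_\b}\ge p$: in the unstable case $N^Q_{\k_\e}=|O_l(V)|=p$, and in the stable case $N^Q_{\k_\b}=1$ for each of the $p$ elements $\b\in G(H^*)$. With that single correction the proof is complete and coincides in substance with the paper's.
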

\begin{proof}
  If $V$ is left unstable, then Lemma \ref{l:mult} implies that $P(\k)$ is a summand of $P(V)\o \ldu V$ and so
  $$
  \sum_{[W]\in O_l(V)} \dim W \dim P(W) =p\cdot \dim (P(V)\o \ldu V) \ge p\cdot \dim P(\k)\,.
  $$
  If $V$ is left stable, then  $\bigoplus\limits_{\b \in G(H^*)}P(\k_\b)$ is a summand of $P(V)\o \ldu V$. Hence, by Remark \ref{r:3.1},
  $$
  \sum_{[W]\in O_l(V)} \dim W \dim P(W) = \dim (P(V)\o \ldu V) \ge p\cdot \dim P(\k)\,.
  $$
  Since $O_r(V) = \{[W\du]\mid [W] \in O_l(\ldu V)\}$, we have
  \begin{multline*}
    \sum_{[W]\in O_r(V)} \dim W \dim P(W) =
  \sum_{[W]\in O_l(\ldu V)} \dim W\du \cdot \dim P(W\du) \\ = \sum_{[W]\in O_l(\ldu V)} \dim W  \cdot \dim P(W)
  \ge p\cdot \dim P(\k)\,. \qedhere
  \end{multline*}
\end{proof}

 Let $\{V_0, \cdots, V_\ell\}$ be a set of simple $H$-modules such that
$O_l(V_0), \dots, O_l(V_\ell)$ are all the disjoint orbits in
$\Irr(H)$ with $V_0 =\k$. By Remark \ref{r:3.1}, $\dim P(V_i) =\dim
P(W)$ for $[W]\in O_l(V_i)$. Let us simply denote $d_i$ and $D_i$
respectively for $\dim V_i$ and $\dim P(V_i)$. Obviously, $D_i\ge
d_i \ge 2$ for $i \ne 0$. Since $H$ is a Frobenius algebra, we  have
\begin{equation} \label{eq:dim H}
\begin{aligned}
\dim H   &= \sum_{[V]\in \Irr(H)} \dim V \cdot \dim P(V) \\
&= p\sum_{\text{unstable } V_i} d_i D_i + \sum_{\text{stable }V_i}
d_i D_i
\end{aligned}
\end{equation}
(cf. \cite[61.13]{CR62book}). Now we can show that $\ell \ge 3$.
  \begin{cor}\label{c:ge3}
   $|\OO_l(H)|, |\OO_r(H)| \ge 3$.
  \end{cor}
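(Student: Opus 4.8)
The plan is to argue by contradiction from the dimension identity \eqref{eq:dim H}, assuming $|\OO_l(H)|\le 2$ and forcing a factorization of $pq$ that primality forbids. First I would note that $|\OO_l(H)|\ge 2$: Lemma~\ref{l:basic}(vi) supplies a simple module of dimension $>1$, and since the $1$-dimensional simples are exactly the $\k_\b$, $\b\in G(H^*)$, which constitute the single orbit $O_l(\k)$, any higher-dimensional simple lies in a second orbit. Thus, if $|\OO_l(H)|\le 2$ there are precisely two orbits, $O_l(\k)$ and $O_l(V_1)$, and every module in $O_l(V_1)$ shares the dimension $d_1=\dim V_1>1$; write $D_1=\dim P(V_1)$ and $D_0=\dim P(\k)$.

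The engine of the argument is the decomposition of the projective module $P(V_1)\o\ldu V_1$ into indecomposable projectives. By \eqref{eq:mult} the multiplicity of $P(W)$ in it equals $\dim\Hom_H(P(V_1)\o\ldu V_1,W)$, and Lemma~\ref{l:mult} pins down exactly the $1$-dimensional constituents $\k_\b$. Because the only simples are the $\k_\b$ and the modules of $O_l(V_1)$ (all of whose projective covers have dimension $D_1$), this yields
\[
P(V_1)\o\ldu V_1\;\cong\; Q\,\oplus\,(m\ \text{indecomposable projectives, each of dimension}\ D_1),
\]
with $m\ge 0$, where $Q=P(\k)$ when $V_1$ is left unstable and $Q=\bigoplus_{\b\in G(H^*)}P(\k_\b)$ when $V_1$ is left stable; here I use Remark~\ref{r:3.1} to know $\dim P(\k_\b)=D_0$ for every $\b$. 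Comparing dimensions gives $d_1D_1=D_0+mD_1$ in the unstable case and $d_1D_1=pD_0+mD_1$ in the stable case, and in both cases $D_0>0$ forces $m\le d_1-1$, hence the crucial inequality $2d_1-m\ge d_1+1$.

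Feeding this into \eqref{eq:dim H}, which with two orbits reads $pq=pD_0+pd_1D_1$ (unstable) or $pq=pD_0+d_1D_1$ (stable), and eliminating $D_0$, I obtain the single rigid identity $q=D_1(2d_1-m)$ in the unstable case and $pq=D_1(2d_1-m)$ in the stable case. Now primality decides everything. In the unstable case $D_1\ge d_1\ge 2$ is a divisor of the prime $q$, so $D_1=q$ and $2d_1-m=1$, contradicting $2d_1-m\ge d_1+1\ge 3$. In the stable case Lemma~\ref{l:lb} gives $D_1\ge 2p$ while Lemma~\ref{l:div} (applied to $\a$, of order $p$) gives $p\mid d_1$, so $d_1\ge p$; the divisor $D_1\ge 2p$ of $pq$ must be $q$ or $pq$, and $D_1=pq$ forces $2d_1-m=1$ while $D_1=q$ forces $2d_1-m=p$, each contradicting $2d_1-m\ge d_1+1\ge p+1$. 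This excludes $|\OO_l(H)|=2$, so $|\OO_l(H)|\ge 3$, and $|\OO_r(H)|=|\OO_l(H)|\ge 3$ by Remark~\ref{r:3.1}.

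I expect the \emph{main obstacle} to be the middle step. The dimension formula \eqref{eq:dim H} on its own gives only $pq\ge 2p\,\dim P(\k)$ (Corollary~\ref{c:orbitcount} applied to both orbits), which is far too weak to conclude. The contradiction materializes only after one exploits the rigidity created by having just two orbits: every indecomposable projective summand of $P(V_1)\o\ldu V_1$ apart from the copies of $P(\k_\b)$ carries the single fixed dimension $D_1$, which converts the Frobenius identity into the arithmetic constraint $pq=D_1(2d_1-m)$ (or $q=D_1(2d_1-m)$). Managing the multiplicity bookkeeping correctly, in particular securing $m\le d_1-1$, and then checking it against the admissible divisors of $pq$, is where the actual work concentrates.
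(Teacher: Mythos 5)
Your proof is correct and is essentially the paper's own argument: the same decomposition of $P(V_1)\o\ldu V_1$ via \eqref{eq:mult} and Lemma~\ref{l:mult}, the same elimination of $D_0$ from \eqref{eq:dim H} to reach $q=(2d_1-m)D_1$ (unstable) resp.\ $pq=(2d_1-m)D_1$ (stable), and the same primality contradiction. The only cosmetic difference is in the stable case, where you apply Lemma~\ref{l:div} to $V_1$ (getting $p\mid d_1$) while the paper applies it to $P(V_1)$ (getting $p\mid D_1$) before invoking Lemma~\ref{l:lb}; both close the argument equally well.
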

  \begin{proof} By Remark \ref{r:3.1}, it suffices to show $|\OO_l(H)|\ge
  3$. By Lemma \ref{l:basic}(vi), there exists a simple $H$-module $V$ with $\dim V
  >1$. Therefore, $|\OO_l(H)|\ge 2$. Suppose there are exactly two orbits $O_l(V_0)$ and
  $O_l(V_1)$. By Lemma \ref{l:basic}(vi), $D_1 \ge 2d_1 \ge 4$.
   If $V_1$ is stable, then  \eqref{eq:dim H} becomes
   $$
   pq = pD_0+ d_1 D_1\,.
   $$
   Lemma \ref{l:mult} and \eqref{eq:mult} imply
   $$
   d_1 D_1 = \dim (P(V_1) \o \ldu {V_1})= p D_0+ n_1 D_1
   $$
   for some non-negative integer $n_1 < d_1$. By eliminating $D_0$, we find
   $$
   q = (2d_1-n_1)\frac{D_1}{p}\,.
   $$
   Since $P(V_1)$ is also stable, $D_1$ is
   divisible by $p$. This forces $2d_1-n_1 =q$ and $D_1 =p$ which contradicts Lemma \ref{l:lb}. Therefore,
   $V_1$ must be unstable. Now,  \eqref{eq:dim H} becomes
   $$
   pq = pD_0+ pd_1 D_1\,.
   $$
   Lemma \ref{l:mult} and \eqref{eq:mult} imply
   $$
   d_1 D_1 = \dim (P(V_1) \o \ldu {V_1})=  D_0+ n_1 D_1
   $$
   for some non-negative integer $n_1 < d_1$. By eliminating $D_0$, we find
   $$
   q = (2d_1-n_1)D_1\,.
   $$
   Since  $2d_1-n_1 >1$ and  $D_1 \ge 4$, the above equality contradicts that $q$ is a prime. Therefore, $|\OO_l(H)| \ne 2$ and hence  $|\OO_l(H)| \ge 3$.
  \end{proof}

We close this section with the following proposition.

\begin{prop}\label{p:ge4}
  If every simple $H$-module is left unstable and regular, then $|\OO_l(H)|\ge 4$.
\end{prop}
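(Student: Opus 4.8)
The plan is to argue by contradiction, assuming $|\OO_l(H)| \le 3$ and combining this with Corollary \ref{c:ge3} to conclude $|\OO_l(H)| = 3$. So I will take exactly three disjoint orbits $O_l(V_0), O_l(V_1), O_l(V_2)$ with $V_0 = \k$, and use the dimension notation $d_i = \dim V_i$, $D_i = \dim P(V_i)$ introduced before \eqref{eq:dim H}. Since every simple module is assumed left unstable, equation \eqref{eq:dim H} simplifies to $pq = p(D_0 + d_1 D_1 + d_2 D_2)$, hence
$$
q = D_0 + d_1 D_1 + d_2 D_2.
$$
The regularity hypothesis $O_l(V) = O_r(V)$ is what lets me exploit the tensor-product decomposition symmetrically: for a left unstable $V$, Lemma \ref{l:mult} together with \eqref{eq:mult} tells me that $P(V) \o \ldu V$ contains exactly one copy of $P(\k) = P(V_0)$ and no other $P(\k_\b)$, so the orbit of $V$ must absorb this $P(\k)$ summand, giving a relation of the form $d_i D_i = D_0 + (\text{terms from } O_l(V_i))$.

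**Extracting the key divisibility and counting relations.**

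The heart of the argument will be writing down, for each orbit, the multiplicity equation coming from decomposing $P(V_i) \o \ldu V_i$ into indecomposable projectives as in the proof of Corollary \ref{c:ge3}. Because there are only three orbits and everything is regular, each such tensor product decomposes over the projectives $P(V_0), P(V_1), P(V_2)$ only, yielding equations
$$
d_i D_i = \delta_{i,0}\,\epsilon\text{-term} + \sum_{j} n_{ij} D_j
$$
with non-negative integer coefficients $n_{ij}$ constrained by $\sum_j n_{ij} d_j = d_i^2$ (counting total dimension) and by the single $P(\k)$ summand from Lemma \ref{l:mult}. I would first pin down that $P(\k)$ appears exactly once in each $P(V_i)\o \ldu V_i$ for $i = 1, 2$, and then exploit Corollary \ref{c:orbitcount}, which gives $d_i D_i \ge D_0$ for each unstable orbit, i.e. $d_1 D_1 \ge D_0$ and $d_2 D_2 \ge D_0$. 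Combined with $q = D_0 + d_1 D_1 + d_2 D_2 \ge 3 D_0$ and the lower bounds $D_i \ge 2 d_i$ from Lemma \ref{l:basic}(vi) (applicable since the relevant modules have dimension $> 1$), this boxes in the possible values sharply.

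**Closing the argument.**

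The endgame should force a contradiction with primality of $q$, exactly in the spirit of the $\ell = 2$ case in Corollary \ref{c:ge3}. After eliminating $D_0$ using the multiplicity relations, I expect to reach an equation expressing $q$ as a product or as a sum that factors nontrivially — for instance something like $q = (2d_i - n_{ii})\,(\text{something} \ge 2)$ after eliminating the other variables — contradicting that $q$ is prime, or else forcing $D_i$ to violate the bound $D_i \ge 2d_i$ or the projectivity criterion of Lemma \ref{p:nakayama perm}. The divisibility input from Lemma \ref{l:div} (that $\ord \a = p$ divides $\dim V$ when $V$ is left stable) is \emph{not} directly available here since all modules are unstable, so the arithmetic must instead come from the $n_{ij}$-relations and the primality of $q$.

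**The main obstacle.** The hard part will be managing the larger system of multiplicity equations: with three orbits the tensor decompositions involve several unknown non-negative integers $n_{ij}$, and unlike the two-orbit case there is no immediate single clean elimination. I anticipate needing to use regularity to obtain a symmetry relation among the $n_{ij}$ (so that the $3 \times 3$ multiplicity matrix is constrained), and then to combine the dimension-counting constraints $\sum_j n_{ij} d_j = d_i^2$ with the bounds $D_i \ge 2 d_i$ and $d_i \ge 2$ to eliminate enough variables to reduce everything to a single factorization of $q$ that primality forbids. Getting that elimination to terminate cleanly, rather than leaving a residual multi-parameter family, is where the real care is required.
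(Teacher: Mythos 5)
Your setup coincides with the paper's own: assume $|\OO_l(H)|=3$ via Corollary \ref{c:ge3}, write $q = D_0 + d_1D_1 + d_2D_2$, and use Lemma \ref{l:mult} with \eqref{eq:mult} to get relations of the form $d_iD_i = D_0 + (\text{multiplicities})\cdot D_1 + (\text{multiplicities})\cdot D_2$. But beyond this point the proposal is a plan rather than a proof, and it contains one outright error. You invoke Lemma \ref{l:basic}(vi) as if it gave $D_i \ge 2d_i$ for \emph{every} simple module of dimension $>1$; that lemma is only an existence statement (there is \emph{some} simple $V$ with $\dim V>1$ and $\dim P(V)\ge 2\dim V$), and simple projective modules, for which $D_i=d_i$, are entirely possible --- dealing with them is a recurring concern elsewhere in the paper. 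The paper uses (vi) only to conclude $D_i > d_i$ for \emph{some} $i\in\{1,2\}$. Moreover, the inequalities you assemble ($d_iD_i\ge D_0$ from Corollary \ref{c:orbitcount}, $q\ge 3D_0$, and the unjustified $D_i\ge 2d_i$) cannot by themselves force a contradiction with primality: for instance $D_0=4$, $(d_1,D_1)=(2,8)$, $(d_2,D_2)=(3,13)$ satisfies all of them with $q=59$ prime, so no ``boxing in'' occurs. (Your side constraint $\sum_j n_{ij}d_j = d_i^2$ is also not a correct count; the decomposition of $P(V_i)\o\ldu V_i$ gives $d_iD_i=\sum_j n_{ij}D_j$.)

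The missing engine --- precisely the part you defer as ``where the real care is required'' --- is the following chain in the paper's proof, in which regularity is used twice and concretely. First, for $i\ne j$ one has $[V_i]\notin O_r(V_j)=O_l(V_j)$, hence $\Hom_H(P(V_i), V_j\o\k_\b)=0$ for all $\b$, yielding the cross-orbit equations $d_2D_1 = M_{21}^1D_1+M_{21}^2D_2$ and $d_1D_2 = M_{12}^1D_1+M_{12}^2D_2$; second, via \eqref{eq:hom} and regularity one proves the symmetry $M_{ji}^k = N_{ik}^j$, so these become $(d_2-N_{11}^2)D_1 = M_{21}^2D_2$ and $(d_1-N_{22}^1)D_2 = M_{12}^1D_1$. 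Eliminating $D_0$ gives two expressions $q=(2d_1-N_{11}^1)D_1+(d_2-N_{11}^2)D_2=(d_1-N_{22}^1)D_1+(2d_2-N_{22}^2)D_2$, which force $D_1,D_2$ to be coprime; the cross-orbit equations then give the divisibilities $D_2\mid d_2-N_{11}^2$ and $D_1\mid d_1-N_{22}^1$, together with $d_2\ge N_{11}^2$ and $d_1\ge N_{22}^1$. Finally, Lemma \ref{l:basic}(vi) --- correctly used --- gives $D_i>d_i$ for some $i$, say $i=1$; then $0\le d_1-N_{22}^1<D_1$ and $D_1\mid d_1-N_{22}^1$ force $d_1=N_{22}^1$, whence $q=(2d_2-N_{22}^2)D_2$ with both factors exceeding $1$, contradicting primality. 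None of these steps (the vanishing Hom spaces, the $M$--$N$ symmetry, the coprimality, the divisibility, the case split) is carried out in your proposal, so as written it does not establish the proposition.
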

\begin{proof}
  By Corollary \ref{c:ge3}, it suffices to show $|\OO_l(H)|\ne 3$. Suppose that $|\OO_l(H)|= 3$.
  By  \eqref{eq:mult}, \eqref{eq:dim H} and Lemma \ref{l:mult},
  we have the equations:
  \begin{eqnarray}
    \label{eq1} pq&=& p D_0+ pd_1 D_1 +p d_2 D_2,\\
    \label{eq2} d_1 D_1 &=& D_0 + N_{11}^1 D_1 +  N_{11}^2 D_2,\\
    \label{eq3} d_2 D_2 &=& D_0 + N_{22}^1 D_1 +  N_{22}^2 D_2,
  \end{eqnarray}
  where $N_{ii}^j = \sum\limits_{\b \in G(H^*)}\dim \left(\Hom_H(P(V_i)\o \ldu V_i, \k_\b \o V_j)\right)$.
  On the other hand,
  if $i \ne j$, then $[V_i] \not\in O_r(V_j)$ since $V_j$ is regular. Therefore,
  $$
  \Hom_H(V_j\du \o P(V_i), \k_\b)\cong \Hom_H(P(V_i), V_j \o \k_\b) = 0
  $$
  for all $\b \in G(H^*)$, and so we have
  \begin{eqnarray}
   \label{eq4}  d_2 D_1 &=& M_{21}^1 D_1 +  M_{21}^2 D_2,\\
    \label{eq5} d_1 D_2 &=& M_{12}^1 D_1 +  M_{12}^2 D_2,
  \end{eqnarray}
  where $M_{ji}^k = \sum\limits_{\b \in G(H^*)}\dim \left(\Hom_H(V_j\du \o P(V_i),  \k_\b \o V_k)\right)$.
  By \eqref{eq:hom}, we find
  \begin{eqnarray*}
    M_{ji}^k & = & \sum\limits_{\b \in G(H^*)}\dim \left(\Hom_H(P(V_i)\o \ldu V_k, V_j \o \k_\b)\right) \\
    &=& \sum\limits_{\b \in G(H^*)}\dim \left(\Hom_H(P(V_i)\o \ldu V_k, \k_\b\o V_j )\right)=N_{ik}^j\,.
  \end{eqnarray*}
  The second equality is a consequence of the regularity of $V_j$. Now,  \eqref{eq4} and \eqref{eq5} become
  \begin{eqnarray}
   \label{eq6}  d_2 D_1 &=& N_{11}^2 D_1 +  M_{21}^2 D_2,\\
    \label{eq7} d_1 D_2 &=& M_{12}^1 D_1 +  N_{22}^1 D_2.
  \end{eqnarray}
  Equations \eqref{eq1}, \eqref{eq2} and \eqref{eq3} imply
  \begin{eqnarray}
   \label{eq8} q&=& (2d_1 -N_{11}^1) D_1 + (d_2 - N_{11}^2) D_2, \\
   \label{eq9} q&=& (d_1 -N_{22}^1) D_1 + (2d_2 - N_{22}^2) D_2,\\
   \label{eq10} 1 & < & 2d_i - N_{ii}^i\quad \text{and}\quad q > D_i \quad \text{for }i=1,2.
  \end{eqnarray}
  In particular, $D_1, D_2$ are relatively prime. It follows from \eqref{eq6} and \eqref{eq7} that
$$
D_1 \mid d_1 - N_{22}^1, \quad D_2 \mid d_2 - N_{11}^2, \quad d_1 \ge N_{22}^1,\quad d_2 \ge N_{11}^2.
$$
By Lemma \ref{l:basic} (vi), $D_i> d_i$ for some $i=1,2$. If $D_1
> d_1$, then $d_1=N_{22}^1$. Hence, by \eqref{eq9}, $q=(2d_2 -
N_{22}^2) D_2$. Similarly, if $D_2 > d_2$, then $q=(2d_1 - N_{11}^1)
D_1$. However, both of these conclusions contradict that $q$ is a
prime number.
\end{proof}

\section{The case $q \le 4p+11$}
In this section, we prove our main result:
\begin{thm}\label{t:main}
  Every Hopf algebra of dimension $pq$ over $\k$, where $p,q$ are odd primes with $p < q \le 4p+11$, is trivial.
\end{thm}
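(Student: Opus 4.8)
The plan is to argue by contradiction. By the classification of semisimple Hopf algebras of dimension $pq$ in \cite{EG99} and \cite{GW00}, a semisimple $H$ is automatically a group algebra or the dual of one; so it suffices to show that no \emph{non}-semisimple Hopf algebra of dimension $pq$ exists when $p<q\le 4p+11$. I therefore assume $H$ is non-semisimple and keep all the standing notation of Sections~2--3: by Lemma \ref{l:basic}(i) and duality I may assume $\a\in H^*$ is non-trivial, so that $G(H^*)=\langle\a\rangle$ has order $p$ and acts on $\Irr(H)$ with every orbit of size $1$ (stable) or $p$ (unstable), the trivial orbit $O_l(\k)$ being unstable.

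The engine of the proof is the Frobenius dimension identity \eqref{eq:dim H}, read as a sum over the $G(H^*)$-orbits $O_l(V_0),\dots,O_l(V_\ell)$ with $V_0=\k$. Writing $d_i=\dim V_i$ and $D_i=\dim P(V_i)$, an unstable orbit contributes $p\,d_iD_i$ and a stable orbit contributes $d_iD_i$, while the trivial orbit contributes exactly $p\,D_0$. Corollary \ref{c:orbitcount} says every orbit contributes at least $p\,D_0=p\dim P(\k)$, and Corollary \ref{c:ge3} gives $\ell+1=|\OO_l(H)|\ge 3$ (with $\ell+1\ge 4$ whenever every simple module is unstable and regular, by Proposition \ref{p:ge4}). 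First I would record the sharp lower bounds available for a non-trivial orbit: a stable $V_i$ has $p\mid d_i$ and $D_i\ge 2p$ (Lemma \ref{l:div} and Lemma \ref{l:lb}), so its contribution is at least $2p^2$; an unstable non-projective $V_i$ has $D_i\ge 2d_i$ by Lemma \ref{p:nakayama perm}; and a projective odd-dimensional $V_i$ satisfies $V_i\cong\k_\a\o V_i\bidu$, whence $d_i\ge p+2$ by \nmref{c:dim}.

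Next I would turn the crude orbit bound into exact equations, exactly as in the proofs of Corollary \ref{c:ge3} and Proposition \ref{p:ge4}. For each $i$ the projective module $P(V_i)\o\ldu V_i$ has dimension $d_iD_i$, and Lemma \ref{l:mult} pins down how many copies of each $P(\k_\b)$ it contains (one copy of $P(\k)$ if $V_i$ is unstable, one copy of every $P(\k_\b)$ if $V_i$ is stable); expanding $\dim(P(V_i)\o\ldu V_i)$ through \eqref{eq:mult} then yields a linear system in $D_0,\dots,D_\ell$ with non-negative integer coefficients bounded by the $d_i$, together with the regularity identities $M_{ji}^{k}=N_{ik}^{j}$. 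Running this case analysis over the possible stability and regularity types of $O_l(V_1),O_l(V_2),\dots$ and eliminating $D_0$ reduces everything, as in Corollary \ref{c:ge3}, to expressing $q$ as a product $(\text{integer}>1)(\text{integer}>1)$ or to a divisibility relation among the relatively prime $D_i$; the hypothesis $q\le 4p+11$ is precisely what forces the surviving factorizations to contradict the primality of $q$.

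The main obstacle, I expect, is not the generic estimate but the boundary cases with only three or four orbits and small module dimensions, where the uniform bound ``each orbit $\ge p\dim P(\k)$'' is too weak and one must instead use the exact multiplicity equations together with the divisibility constraints ($p\mid d_i,D_i$ for stable modules; $\ord(\a)\mid\dim$ for $\a$-fixed indecomposables) and careful parity bookkeeping---since \nmref{c:dim} applies only to odd-dimensional modules, I would first isolate the odd-dimensional representatives to which it may legitimately be applied. It is in squeezing out these last configurations, where non-semisimplicity forces $pq=\dim H$ into a narrow window just above $4p^2$, that the numerical bound $q\le 4p+11$ does its work.
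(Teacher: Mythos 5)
Your setup (reduction to the non-semisimple case via \cite{EG99}, the orbit decomposition of the Frobenius identity \eqref{eq:dim H}, the bounds from Lemmas \ref{l:div}, \ref{l:lb}, \ref{p:nakayama perm} and \nmref{c:dim}) matches the paper's, but everything past that point is a plan rather than a proof, and the mechanism you propose for the decisive step is not the one that can work. You suggest expanding $\dim(P(V_i)\o\ldu V_i)$ via Lemma \ref{l:mult} and \eqref{eq:mult} into a linear system, eliminating $D_0$, and reducing to a factorization of $q$ as a product of integers $>1$. That device is exactly how the paper proves Corollary \ref{c:ge3} and Proposition \ref{p:ge4}; but those results only give the lower bounds $|\OO_l(H)|\ge 3$ (resp.\ $\ge 4$), and notably they use only the primality of $q$, \emph{not} the hypothesis $q\le 4p+11$. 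The scheme cannot be pushed further: once there are four or more orbits the system has far more unknowns (the $D_i$, $d_i$, and the multiplicities $N_{ii}^j$) than equations, and the number of orbits is not bounded above, so ``running the case analysis over the stability and regularity types'' is not a finite procedure and does not terminate in a factorization of $q$. In the paper the bound $q\le 4p+11$ enters only through dimension counts of the form $\dim H\le 4p^2+11p$ set against lower bounds like $8p^2+4p$ or $4p^2+12p$, and through parity bookkeeping---never through a factorization of $q$.

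Concretely, the content you are missing is the chain of structural lemmas that occupies Section 4: (a) stable simple modules are eliminated outright (Lemmas \ref{l:stablebound} and \ref{l:unstable}), using the action of a cyclic group of order $4p$ on $\{[V],[V\du],[V\bidu],[V^{\vee\vee\vee}]\}$ to force $\k_\a\o V\bidu\cong V$ and hence $\dim V\ge 2p$ for a stable $V$; (b) every simple module is regular, whence $|\OO_l(H)|\ge 4$ and $D_0\le p+2$ (Lemma \ref{l:ge4}); (c) the key new idea, absent from your outline: $V\quaddu\cong V$ for \emph{every} simple $V$ (Lemma \ref{l:conjuagtebound2}), proved by showing that the modules whose composition factors are invariant under conjugation by $\k_\a$ form a tensor subcategory, which corresponds to a quotient Hopf algebra that Zhu's theorem forces to be all of $H$; (d) the endgame, which is a parity argument rather than a factorization: $D_0$ is even (Lemma \ref{l:D0even}), so in the orbit decomposition of $pq$ some representative $X\ne\k$ with $X\cong X\bidu$ has $\dim X\dim P(X)$ odd; \nmref{c:dim} and the dimension budget force $\dim X=3$, and then decomposing $X\o P(\k)$ yields a second odd-dimensional self-bidual projective summand $Q$, giving $3D_0\ge 2(p+2)$ and the final contradiction $\dim H>4p^2+11p$. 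You correctly flagged the ``boundary cases'' as the main obstacle, but that obstacle is the entire theorem, and the tool you reserved for it is the one tool the paper shows is insufficient beyond three orbits.
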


By \cite{EG99}, it suffices to show that Hopf algebras of these
dimensions are semisimple. We proceed to prove that by
contradiction. Suppose there exists a non-semisimple Hopf algebra
$H$ of these dimensions. By duality and Lemma \ref{l:basic}(i), we
can further assume the distinguished group-like element $\a \in H^*$
is not trivial.

Let $D_0$ denote $\dim P(\k)$. Since the composition factors of
$P(\k)$ cannot be all 1-dimensional (cf. \cite[Lemma 2.3]{EG03}),
$D_0 \ge 4$. It follows from Lemma \ref{c:orbitcount} that
\begin{equation}\label{eq:olb}
  \sum_{[W] \in O(V)} \dim W \dim P(W) \ge pD_0 \ge 4p
\end{equation}
for all simple $H$-modules $V$.
\begin{lem} \label{l:stablebound}
  A simple $H$-module $V$ is left stable if, and only if, $V$ is right stable. In this case, $\dim V \ge 2p$.
\end{lem}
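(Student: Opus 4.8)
The plan is to prove the equivalence "left stable iff right stable" by relating left and right stability through duality, using the distinguished group-like element $\a$. First I would observe that for a simple module $V$, left stability means $\k_\a \o V \cong V$ (since $\langle\a\rangle = G(H^*)$), while right stability means $V \o \k_\a \cong V$. The natural bridge between these is the left dual: applying $(-)\du$ to an isomorphism $\k_\a \o V \cong V$ gives an isomorphism $V\du \cong (\k_\a \o V)\du \cong V\du \o \k_{\a\inv}$, which says $V\du$ is right stable with respect to $\a\inv$, equivalently (since $\langle\a\rangle$ is a group of prime order $p$) right stable. So left stability of $V$ forces right stability of $V\du$; I then need to convert this back to a statement about $V$ itself rather than $V\du$.

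The key tool for converting statements about $V$ to statements about its duals is the double-dual relation $\k_\a \o V\bidu \cong \ld\theta V$ from Section 1 together with Radford's formula \eqref{eq:swap}. Since left stability is a property preserved under the operations $V \mapsto V\bidu$ and tensoring with the $\k_\b$ (the $\k_\b$ are fixed by these operations up to the group action), I expect left stability of $V$ to be equivalent to left stability of $V\bidu$, and then \eqref{eq:swap} relating $V\quaddu$ back to $V$ closes the loop. The cleanest route is probably to show directly that $O_l(V)$ being a singleton is equivalent to $O_r(V)$ being a singleton by checking that the bijection $[W] \mapsto [W\du]$ between $\Irr(H)$ and itself intertwines the left and right $G(H^*)$-actions appropriately (this is essentially the content of the identity $O_r(V\du) = \{[W\du] \mid [W] \in O_l(V)\}$ recorded in Remark \ref{r:3.1}); a singleton left orbit maps to a singleton right orbit, giving one implication, and applying the argument to $\ldu V$ or using $V\bidu \cong \ld{S^2}V$ gives the converse.

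For the dimension bound $\dim V \ge 2p$ in the stable case, I would argue as follows. If $V$ is left stable, then by Lemma \ref{l:div} applied to $\k_\a \o V \cong V$ with $\ord \a = p$, we get $p \mid \dim V$, so $\dim V = np$ for some $n \ge 1$. It remains to rule out $\dim V = p$, i.e. $n = 1$. If $\dim V = p$, I would examine $\dim P(V)$: by Lemma \ref{l:lb} we already know $\dim P(V) \ge 2p = 2\dim V$, so by the contrapositive of the last assertion of Lemma \ref{p:nakayama perm}, $V$ is not projective and $P(V) \ne V$. But this does not immediately yield a contradiction, so instead I would invoke \nmref{c:dim}: a stable $V$ satisfies $V \cong \k_\a \o V\bidu$ when $P(V) = V$, but more to the point, when $\dim V = p$ is odd and $V$ satisfies condition (I) with $\b = \a$ nontrivial, the corollary forces $\dim V \ge p+2 > p$, a contradiction. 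Hence $n \ge 2$ and $\dim V \ge 2p$.

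The main obstacle I anticipate is correctly establishing condition (I) of \nmref{c:dim} for a left stable $V$: one needs $V \cong \k_\a \o V\bidu$, not merely $\k_\a \o V \cong V$. This requires combining left stability $\k_\a \o V \cong V$ with the double-dual isomorphism $V\bidu \cong \ld{S^2}V$ and checking that $V\bidu \cong V$ (equivalently that $V$ is $S^2$-stable) holds for the stable simple modules in this setting; the delicate point is that stability under the $\a$-action does not automatically give $V\bidu \cong V$, so I would need to either appeal to the Nakayama permutation description in Lemma \ref{p:nakayama perm} — which ties $\Soc(P(V))$ to $\k_{\a\inv} \o \lbidu V$ and forces compatibility when $V$ is stable — or handle the $S^2$-twist explicitly. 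Getting this compatibility right, so that \nmref{c:dim} genuinely applies and rules out $\dim V = p$, is where the care is needed; the rest of the argument is a short orbit-counting and divisibility bookkeeping.
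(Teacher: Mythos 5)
Your proposal has two genuine gaps, both of the same kind: you try to make formal duality arguments do work that in the paper is done by dimension counting using the hypothesis $q\le 4p+11$. Indeed your argument for the equivalence never invokes that hypothesis at all, which is a red flag --- this lemma sits in Section~4 precisely because it needs it. Concretely: dualizing $\k_\a\o V\cong V$ shows that $V\du$ is right stable, \emph{not} that $V$ is. The bijection $[W]\mapsto[W\du]$ carries $O_l(V)$ onto $O_r(V\du)$, so ``a singleton left orbit maps to a singleton right orbit'' only concerns the different module $V\du$; it says nothing about $O_r(V)$. No iteration of this repairs the defect: dualizing again yields ``$V\bidu$ is left stable,'' which you already knew (both stabilities are preserved by $(-)\bidu$, since $\k_\a\bidu\cong\k_\a$), so the argument just cycles through $[V],[V\du],[V\bidu],\dots$ without ever connecting $O_l(V)$ to $O_r(V)$. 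What the paper actually does for ``left stable $\Rightarrow$ right stable'' is count: if $V$ is left stable but not right stable, then $O_r(V)$ consists of $p$ distinct classes, each of them left stable (left stability is preserved by $-\o\k_\b$), hence each satisfying $\dim W\dim P(W)\ge 2p^2$ by Lemmas \ref{l:div} and \ref{l:lb}; adding the contributions $\ge pD_0\ge 4p$ of $O_r(\k)$ and of a third right orbit supplied by Corollary \ref{c:ge3} (see \eqref{eq:olb}), one gets $\dim H\ge 2p^3+8p>4p^2+11p\ge pq$, a contradiction. Only the converse direction is formal, and it uses the counting step as input: $V$ right stable $\Rightarrow\ \ldu V$ left stable $\Rightarrow\ \ldu V$ right stable $\Rightarrow V\cong(\ldu V)\du$ left stable.

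For the bound $\dim V\ge 2p$, you correctly isolate the crux --- one must show $V\bidu\cong V$, so that condition (I) of \nmref{c:dim} holds with $\b=\a$ --- but neither of your proposed fixes closes it. Lemma \ref{p:nakayama perm} yields $V\cong\k_\a\o V\bidu$ only when $V$ is projective, and a left stable simple of dimension $p$ is never projective, since $\dim P(V)\ge 2p>\dim V$ by Lemma \ref{l:lb}; and ``handling the $S^2$-twist explicitly'' has no content without further input. The paper again resolves this by counting, and it needs the already-established equivalence: since $V$ is both left and right stable, \eqref{eq:swap} gives $V\quaddu\cong\k_{\a\inv}\o V\o\k_\a\cong V$, so the dualization orbit $\AA=\{[V],[V\du],[V\bidu],[V^{\vee\vee\vee}]\}$ has size $1$, $2$ or $4$; size $4$ is impossible because it would give $\dim H\ge 4\dim V\dim P(V)+pD_0\ge 8p^2+4p>4p^2+11p$; hence $|\AA|\le 2$, i.e.\ $V\bidu\cong V$, and then $\k_\a\o V\bidu\cong\k_\a\o V\cong V$. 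From that point on, your use of Lemma \ref{l:div} and \nmref{c:dim} to exclude $\dim V=p$ coincides with the paper's conclusion.
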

\begin{proof}
  Let $V$ be a left stable simple $H$-module.
  By Lemma \ref{l:div}, $\dim V=np$ for some positive integer $n$. It follows from Lemma \ref{l:lb} that $\dim V\cdot \dim P(V) \ge 2p^2$.

  Notice that $W$ is left stable for $[W] \in O_r(V)$.
  By Corollary \ref{c:ge3}, there exists a right $G(H^*)$-orbit different from $O_r(V), O_r(\k)$.
  If $V$ is not right stable, then by Corollary \ref{c:orbitcount} and \eqref{eq:olb},
 \begin{eqnarray*}
   \dim H &\ge& \sum_{[W]\in O_r(V)} \dim W \dim P(W) + pD_0 +pD_0 \\
   &\ge & p(2p^2)+8p > 4p^2+11p\,.
 \end{eqnarray*}
 Therefore, $V$ is also right stable. Conversely, if $V$ is right stable, then $\ldu V$ is left stable. Hence,
 by the first part of the proof, $\ldu V$ is also right stable. Therefore, $V \cong (\ldu V)\du$ is left
 stable.

 Now let $V$ be a left stable simple $H$-module. Since $V$ is also right stable, $V\quaddu \cong
 \k_{\a\inv} \o V \o\k_{\a} \cong V$. Consider the set $\AA=\{[V], [V\du], [V\bidu], [V^{\vee\vee\vee}]\}$.
 Since $[V\quaddu]=[V]$ and $S^{4p}=\id$, the cyclic group $C_{4p}=\langle x \rangle$ of order $4p$ acts
 transitively on $\AA$ with
 $$
 x \cdot [V] = [V\du]\,.
 $$
Thus $|\AA|=1, 2$ or $4$. If $|\AA|=4$, then
$$
\dim H \ge 4  \dim V  \dim P(V) + p D_0 \ge 8p^2+4p > 4p^2+11p\,.
$$
Therefore, $|\AA|\le 2$, and so $[V\bidu]=x^2 \cdot [V] =[V]$. Thus
we obtain $\k_\a \o V\bidu \cong V$. By \nmref{c:dim}, $\dim V \ne
p$ and hence $\dim V \ge 2p$.
\end{proof}

\begin{lem}\label{l:unstable}
  $H$ has no left or right stable simple $H$-module.
\end{lem}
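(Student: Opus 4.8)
The plan is to argue by contradiction: suppose $H$ has a left stable simple module $V$. By Lemma \ref{l:stablebound}, $V$ is simultaneously left and right stable, and $\dim V \ge 2p$; moreover the same lemma shows $\dim V \cdot \dim P(V) \ge 2p^2$. The first thing I would do is count how many distinct stable and unstable orbits we can force to exist and add up their contributions to $\dim H = pq \le 4p^2 + 11p$, using the orbit lower bound \eqref{eq:olb}, namely $\sum_{[W] \in O(V)} \dim W \dim P(W) \ge pD_0 \ge 4p$. The key arithmetic constraint is that the stable orbit $O_l(V)$ alone contributes at least $p \cdot \dim V \cdot \dim P(V) \ge 2p^3$ to $\dim H$ once we account for the orbit size being $1$ but the dimension being large — actually since $V$ is stable the orbit is a singleton, so its contribution is just $\dim V \cdot \dim P(V) \ge 2p^2$. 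So the crux is to show that forcing even one stable orbit together with enough unstable orbits overruns the budget $4p^2 + 11p$.

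Concretely, I would invoke Corollary \ref{c:ge3} to guarantee $|\OO_l(H)| \ge 3$, so besides $O_l(\k)$ and $O_l(V)$ there is at least a third orbit $O_l(U)$. Then I would split into cases on the dimension $\dim V$. Using Lemma \ref{l:lb}, $\dim P(V) \ge 2p$, so for a stable $V$ with $\dim V \ge 2p$ we get $\dim V \cdot \dim P(V) \ge 2p \cdot 2p = 4p^2$. Adding the contribution $pD_0 \ge 4p$ from $O_l(\k)$ and another $pD_0 \ge 4p$ from the third orbit $O_l(U)$, we would obtain
\[
\dim H \ge 4p^2 + 4p + 4p = 4p^2 + 8p,
\]
which is uncomfortably close to but not exceeding $4p^2 + 11p$. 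So the naive count does not quite close, and the real work is to squeeze out the extra $3p$. I expect this gap to be the main obstacle.

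To close the gap I would refine the estimate on the stable orbit. Since $V$ is stable, Lemma \ref{l:div} gives $\dim V = np$, and if $\dim P(V) = \dim V$ (the projective case) then Lemma \ref{p:nakayama perm} and \nmref{c:dim} force $\dim V \ge 2p$ but then $\dim V \cdot \dim P(V) = \dim V^2 \ge 4p^2$, with strict improvement if $n \ge 3$. The more promising route is to note that a stable $V$ satisfying $\dim P(V) < 2\dim V$ must have $V \cong \k_\a \o V\bidu$ projective, and then \nmref{c:dim} rules out $\dim V = p$, pushing $\dim V \ge 2p$; whereas if $\dim P(V) \ge 2 \dim V$, then $\dim V \cdot \dim P(V) \ge 2 (\dim V)^2 \ge 2(2p)^2 = 8p^2$, which already gives $\dim H \ge 8p^2 + 4p > 4p^2 + 11p$ for $p \ge 3$. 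So the genuinely tight case is the projective stable $V$ with $\dim V = np$, $n \ge 2$, contributing $(np)^2 = n^2 p^2$. Combined with the two other orbits contributing $\ge 8p$, we get $\dim H \ge 4p^2 + 8p$, and I would then use the sharper statement that the third orbit $O_l(U)$, being either unstable (contributing $p D_0 \ge 4p$) or itself large, forces an additional term. The cleanest finish is likely to observe that $q \le 4p + 11$ forces $pq \le 4p^2 + 11p$, while the accumulated contributions from a stable orbit plus the mandatory $O_l(\k)$ plus a third orbit exceed this once one tracks the parity or divisibility of $D_0$ and $\dim P(V)$ by $p$; the remaining $3p$ slack should be eliminated by noting $D_0 \ge 4$ combined with $D_0$ being even or by a more careful accounting of a fourth orbit via Proposition \ref{p:ge4}, which applies precisely when all simples are unstable and regular — so the existence of a stable module must be played against the orbit count to reach the contradiction $pq > 4p^2 + 11p$.
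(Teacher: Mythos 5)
Your setup tracks the paper's own proof closely: argue by contradiction, use Lemma \ref{l:stablebound} to get $\dim V \ge 2p$, and split on whether $\dim P(V) \ge 2\dim V$. Your non-projective case (the $\ge 8p^2$ estimate) is exactly the paper's first step, and you correctly isolate the tight case --- $V$ projective and stable, contributing $(\dim V)^2 \ge 4p^2$ --- and correctly compute that the generic orbit bound \eqref{eq:olb} then only yields $\dim H \ge 4p^2 + 8p$, short of the needed contradiction by $3p$. But the idea that closes this gap is missing from your proposal. The paper feeds the projectivity of $V$ back into Lemma \ref{l:basic}(vi): that lemma guarantees a simple $W$ with $\dim W \ge 2$ \emph{and} $\dim P(W) \ge 2\dim W \ge 4$. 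Since $V$ is projective and $\k$ is one-dimensional, $W$ is isomorphic to neither, so $O_l(W)$ is a third orbit; and after checking that $V$ is the \emph{unique} stable simple module (two stable simples would contribute $\ge 4p^2$ each, overrunning the budget), $W$ must be left unstable, so its orbit has $p$ members each contributing $\dim W \dim P(W) \ge 8$. Thus the third orbit contributes at least $8p$, not merely $pD_0 \ge 4p$, and
$$
\dim H \ge \dim V \dim P(V) + 8p + pD_0 \ge 4p^2 + 12p > 4p^2 + 11p\,,
$$
the desired contradiction.

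Neither of your proposed finishes can substitute for this step. Using the evenness of $D_0$ is circular: that is Lemma \ref{l:D0even}, which comes later in the paper and rests on Lemmas \ref{l:conjuagtebound} and \ref{l:conjuagtebound2} (and on every orbit having $p$ elements), all of which invoke the very lemma you are proving; and in any case evenness adds nothing beyond $D_0 \ge 4$, which you have already spent. Invoking Proposition \ref{p:ge4} is a non-starter: its hypothesis is that \emph{every} simple module is left unstable and regular, which is precisely negated by the assumed existence of the stable module $V$, so it cannot be used to manufacture a fourth orbit. The extra $3p$ has to come from upgrading the third orbit's contribution from $4p$ to $8p$ via Lemma \ref{l:basic}(vi), and that upgrade is only available once you have first established that $V$ itself is projective.
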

\begin{proof}
   By Lemma \ref{l:stablebound}, it suffices to show that left stable simple $H$-modules do not exist. Suppose there is a left stable simple $H$-module $V$.
  \enumera
  {
  \item \emph{$V$ is projective}. For otherwise, by Proposition \ref{p:nakayama perm} and Lemma \ref{l:stablebound},
   $P(V) \ge 2\dim V \ge 4p$ and so
  $$
  \dim H \ge \dim V\dim P(V) +pD_0 \ge 8p^2+4p > 4p^2+11p\,.
  $$
  \item \emph{$V$ is the unique left stable simple $H$-module.} If there exists a left stable simple
  $H$-module $W$ not isomorphic to $V$, then $\dim V, \dim W \ge 2p$
  and so
  $$
  \dim H \ge \dim V\dim P(V) +\dim W \dim P(W)+ pD_0 \ge 8p^2+4p\,.
  $$
  \item Since $V$ is projective, by Lemma \ref{l:basic}(vi), there exists a simple $H$-module $W$ such that
  $\dim P(W)\ge 2\dim W \ge 4$. Obviously, $O_l(W)$ is different from $O_l(V)$, $O_l(\k)$. By (2), $W$ is left unstable. Therefore,
  $$
  \sum_{[W']\in O_l(W)} \dim W'\dim P(W') =p \dim W \dim P(W) \ge p(2\cdot 4) =8p
  $$
  and hence
  $$
  \dim H \ge \dim V \dim P(V) + 8p +pD_0\ge 4p^2+12p,
  $$
  a contradiction! \qedhere
  }
\end{proof}
\begin{lem} \label{l:ge4}
  $|\OO_l(H)| \ge 4$ and $D_0 \le p+2$.
\end{lem}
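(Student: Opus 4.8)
The plan is to establish the two assertions in sequence, deducing the bound on $D_0$ from the orbit count. The key structural input is Lemma \ref{l:unstable}: every simple $H$-module is left unstable, so every left $G(H^*)$-orbit has exactly $p$ elements, and the only $1$-dimensional simples lie in $O_l(\k)$. I would prove $|\OO_l(H)| \ge 4$ first, then feed that into the Frobenius dimension formula to get $D_0 \le p+2$.

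For $|\OO_l(H)| \ge 4$ I would split into cases according to whether $H$ admits a non-regular simple module. If every simple is regular, then since every simple is also left unstable, Proposition \ref{p:ge4} applies verbatim and gives $|\OO_l(H)| \ge 4$. The substantive case is when some simple $V$ fails to be regular. Here the idea is to pass to the two-sided action of $G(H^*)\times G(H^*)$ on $\Irr(H)$, $(\b,\gamma)\cdot[V]=[\k_\b \o V \o \k_\gamma]$, and examine the two-sided orbit $\Omega(V)$. Since $G(H^*)\cong C_p$, the stabilizer of $[V]$ is a subgroup of $C_p\times C_p$, so $|\Omega(V)|\in\{1,p,p^2\}$. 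Left, and hence (by Lemma \ref{l:stablebound}) right, instability rules out $|\Omega(V)|=1$; and $|\Omega(V)|=p$ would force $O_l(V)=\Omega(V)=O_r(V)$, contradicting non-regularity. Thus $|\Omega(V)|=p^2$. As every simple in $\Omega(V)$ is left unstable, $\Omega(V)$ is a disjoint union of left orbits each of size $p$, so it contains exactly $p$ distinct left orbits; each consists of modules of dimension $\dim V>1$ and is therefore distinct from $O_l(\k)$. Counting $O_l(\k)$ as well gives $|\OO_l(H)|\ge p+1\ge 4$, since $p\ge 3$.

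For $D_0\le p+2$ I would use the orbit count just obtained. Because every simple is left unstable, \eqref{eq:dim H} collapses to $pq=\dim H = p\sum_i d_i D_i$, i.e. $q=\sum_{i=0}^{\ell} d_i D_i$ where $\ell+1=|\OO_l(H)|$. Corollary \ref{c:orbitcount} applied to each orbit $O_l(V_i)$ gives $p\,d_i D_i \ge pD_0$, hence $d_i D_i \ge D_0$ for every $i$. Combined with $|\OO_l(H)|\ge 4$ this yields $q\ge 4D_0$, and together with $q\le 4p+11$ we obtain $4D_0\le 4p+11$, so $D_0\le p+\frac{11}{4}$; as $D_0$ is an integer, $D_0\le p+2$.

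The main obstacle is the non-regular case of the first assertion: Proposition \ref{p:ge4} is unavailable there, and the numerical identities \eqref{eq8}--\eqref{eq10} underlying its proof depend on regularity. The two-sided orbit argument is what bridges this gap, and the crucial point making it work is that Lemma \ref{l:unstable} forces every left orbit inside $\Omega(V)$ to have the full size $p$, so that a single non-regular module manufactures $p\ge 3$ new left orbits at once.
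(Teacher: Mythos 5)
Your proof is correct, but your handling of the non-regular case is genuinely different from the paper's. The two arguments share the same skeleton: reduce the first claim to Proposition \ref{p:ge4} via Lemma \ref{l:unstable}, and get $D_0 \le p+2$ from Corollary \ref{c:orbitcount} plus $\dim H = pq \le 4p^2+11p$ (your $q = \sum_i d_i D_i \ge 4D_0$ is the same computation as the paper's $4p^2+11p \ge \dim H \ge 4pD_0$). The divergence is what to do with a hypothetical non-regular simple $V$. The paper, like you, notes that the two-sided orbit $\AA$ of $[V]$ under $G(H^*)\times G(H^*)$ has exactly $p^2$ elements, but then it eliminates $V$ by dimension counting: if $V$ is not projective, that orbit alone contributes $p^2\dim V\dim P(V) \ge 8p^2 > \dim H$; if $V$ is projective, every member of $\AA$ is projective, so Lemma \ref{l:basic}(vi) supplies a non-projective simple $W$ whose orbit lies outside $\AA$, forcing $\dim H \ge 4p^2+12p$, again a contradiction. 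Thus the paper proves that \emph{every} simple is regular and only then invokes Proposition \ref{p:ge4}. You instead keep the non-regular simple and read the orbit count off of it directly: since Lemma \ref{l:unstable} makes every left orbit have full size $p$, the $p^2$-element set $\AA$ splits into exactly $p$ left orbits, all consisting of modules of dimension $>1$ and hence distinct from $O_l(\k)$, so $|\OO_l(H)| \ge p+1 \ge 4$. Your route is more economical: it needs no projectivity dichotomy, no appeal to Lemma \ref{l:basic}(vi), and no use of the bound $q \le 4p+11$ in that case, and it even yields the sharper count $|\OO_l(H)| \ge p+1$ there. What it gives up is the intermediate structural fact that all simples are regular; since that fact is not part of the statement of the lemma (and is recovered later, in stronger form, in Lemma \ref{l:conjuagtebound2}), nothing downstream in the paper is lost.
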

\begin{proof}
   To show the first inequality, by Proposition \ref{p:ge4} and Lemma \ref{l:unstable},
   it suffices to prove that every simple $H$-module is regular.
   Suppose there exists a simple $H$-module $V$ such that
   $O_r(V) \ne O_l(V)$. Then $\dim V \ge 2$ and the set
   $$\AA = \{[\k_\b \o V \o \k_{\b'}]\mid \b, \b' \in G(H^*)\}$$
   contains more than $p$ elements. Obviously, the group $G(H^*) \times G(H^*)$ acts transitively on $\AA$.
   Therefore, $|\AA| =p^2$. If $V$ is not
   projective, then $\dim V \cdot \dim P(V) \ge 2 \cdot 4$ and hence
   \begin{multline*}
      \dim H \ge \sum_{[W] \in \AA} \dim W \dim P(W) +p D_0 \\
   = p^2 \dim V \dim P(V) + pD_0
   \ge 8 p^2+4p > 4p^2+11p\,.
   \end{multline*}
   Therefore $V$ is projective, and hence $\k_\b \o V \o \k_{\b'}$ are projective for $\b, \b' \in G(H^*)$.
   By Lemma \ref{l:basic}(vi), there exists a simple
   $H$-module $W$ such that $\dim W \ge 2$ and $\dim P(W) \ge 2\dim W$.
   Thus,
   $$
   \dim H \ge p^2 \dim V \dim P(V) + p \dim W \dim P(W)+ pD_0
   \ge 4p^2 + 8p+4p,
   $$
   which is a contradiction! Therefore, every simple $H$-module is regular.

   By Corollary \ref{c:orbitcount} and the first inequality, we have
   $$
   4p^2+11p \ge \dim H \ge 4 pD_0\,.
   $$
   Therefore, $D_0 \le p+2$.
\end{proof}
\begin{lem} \label{l:conjuagtebound}
  If $V$ is a simple $H$-module such that $V\cong \k_{\a\inv}\o V\o \k_\a$, then $\dim P(V) \ge 2\dim V$.
\end{lem}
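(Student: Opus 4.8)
The plan is to deduce the inequality from the projectivity criterion of Lemma~\ref{p:nakayama perm}, which states that $\dim P(V)\ge 2\dim V$ holds precisely when $V$ is \emph{not} projective. So I would assume, for contradiction, that $V$ is projective and show that this forces $V$ to be left stable, contradicting Lemma~\ref{l:unstable}. The first move is to rewrite the hypothesis in a more usable form. Applying \eqref{eq:swap}, which gives $\k_\a\o V\quaddu\o\k_{\a\inv}\cong V$ for \emph{every} module, and tensoring on the left by $\k_{\a\inv}$ and on the right by $\k_\a$, one obtains $V\quaddu\cong\k_{\a\inv}\o V\o\k_\a$. Hence the hypothesis $V\cong\k_{\a\inv}\o V\o\k_\a$ is equivalent to the single relation $V\cong V\quaddu$.

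Now suppose $V$ is projective. By Lemma~\ref{p:nakayama perm}, $V\cong\k_\a\o V\bidu$, equivalently $V\bidu\cong\k_{\a\inv}\o V$. Since $X\bidu\cong\ld{S^2}X$ and $S^2$ commutes with the algebra automorphism $R(\a\inv)$ (Section~1), the double-dual functor commutes up to isomorphism with $\k_{\a\inv}\o(-)$; applying it to $V\bidu\cong\k_{\a\inv}\o V$ therefore yields
$$
V\quaddu\cong\k_{\a\inv}\o V\bidu\cong\k_{\a^{-2}}\o V.
$$

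Combining this with the reformulated hypothesis $V\cong V\quaddu$ gives $V\cong\k_{\a^{-2}}\o V$, that is, $\k_{\a^{2}}\o V\cong V$. Because $\a$ is a non-trivial group-like element, Lemma~\ref{l:basic}(ii) forces $\ord\a=|G(H^*)|=p$ with $G(H^*)=\langle\a\rangle$; as $p$ is odd, $\a^2$ again generates $G(H^*)$, so $\k_{\a^{2}}\o V\cong V$ iterates to $\k_\gamma\o V\cong V$ for every $\gamma\in\langle\a^2\rangle=G(H^*)$, in particular $\k_\a\o V\cong V$. Thus $V$ is left stable, contradicting Lemma~\ref{l:unstable}. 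Consequently $V$ is not projective and $\dim P(V)\ge 2\dim V$.

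I expect the main obstacle to be conceptual rather than computational: one has to notice that the conjugation hypothesis is nothing but $V\cong V\quaddu$, and then play the two expressions for $V\quaddu$—namely $V\quaddu\cong V$ from the hypothesis and $V\quaddu\cong\k_{\a^{-2}}\o V$ from projectivity—against each other to reach a stability statement that Lemma~\ref{l:unstable} already rules out. It is worth noting that the more obvious route, feeding $\b=\a$ into the trace computation behind Corollary~\ref{c:dim}, would stall here: that argument needs $\dim V$ to be odd in order to exclude a vanishing trace, whereas the present lemma imposes no parity hypothesis. Passing through left stability sidesteps this difficulty entirely.
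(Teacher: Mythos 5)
Your proposal is correct and follows essentially the same route as the paper's proof: assume $V$ is projective, invoke Lemma~\ref{p:nakayama perm} to get $V\cong\k_\a\o V\bidu$, use \eqref{eq:swap} to reformulate the hypothesis as $V\cong V\quaddu$, and then play the two expressions for $V\quaddu$ against each other to deduce stability, contradicting Lemma~\ref{l:unstable}. The only (cosmetic) difference is that you iterate $V\bidu\cong\k_{\a\inv}\o V$ on the left and conclude that $V$ is \emph{left} stable, whereas the paper's bookkeeping lands on $V\cong V\o\k_{\a^2}$ and concludes \emph{right} stability; both are excluded by Lemma~\ref{l:unstable}.
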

\begin{proof}
  Suppose $V$ is an $H$-module such that $\k_{\a\inv} \o V \o \k_\a \cong V$ and $\dim P(V) < 2 \dim V$.
  By Lemma \ref{p:nakayama perm}, $V$ is projective and  $V \cong \k_\a \o  V\bidu$.
  Since $V\quaddu \cong \k_{\a\inv} \o V \o \k_\a$ (cf.
  \eqref{eq:swap}), we have
  $$
  V\bidu \cong \k_\a \o  V\quaddu \cong V \o \k_\a\,,
  $$
  and hence
  $$
  V \cong V\quaddu \cong  V\bidu \o \k_\a \cong V \o \k_{\a^2}\,.
  $$
  Since $\ord(\a)=p$,  $V$ is right stable, but this
  contradicts Lemma \ref{l:unstable}.
\end{proof}
Using an argument similar to \cite[Lemma 2.8]{EG03}, we find
$V\quaddu \cong V$ for all $V \in \C{H}$ in the following lemma.
\begin{lem} \label{l:conjuagtebound2}
For all simple $H$-module $V$,
  $V\quaddu \cong V$ and $\dim P(V) \ge 2\dim V$.
\end{lem}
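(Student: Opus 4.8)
The plan is to prove $V\quaddu \cong V$ for every simple $H$-module and deduce the dimension bound $\dim P(V) \ge 2\dim V$ by combining the quadruple-dual structure with Lemma \ref{l:conjuagtebound}. The key observation driving the whole argument is \eqref{eq:swap}, which gives $V\quaddu \cong \k_{\a\inv}\o V \o \k_\a$, together with the fact from Lemma \ref{l:ge4} that every simple module is regular, so $O_l(V)=O_r(V)$ for all $V$. The main point is to show that the two-sided $G(H^*)$-action cannot move $[V]$ around too much without forcing a dimension count that exceeds $\dim H = pq \le 4p^2+11p$.

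\emph{First I would} set up the orbit of $[V]$ under the $\k$-linear autoequivalences of duals and twists. Since $S^{4p}=\id$ by Lemma \ref{l:basic}(iii), the quadruple dual $(-)\quaddu$ has finite order dividing $p$, and by \eqref{eq:swap} this order equals the order of conjugation by $\k_\a$ on $[V]$. The divisibility result of Lemma \ref{l:div} (applied to the two-sided action, or equivalently the argument in the proof of Lemma \ref{l:ge4}) shows that if $V\quaddu \not\cong V$, then the orbit $\{[\k_\b \o V\quaddu{}^{\,k}]\}$ generated by iterating $(-)\quaddu$ has size divisible by $p$, since $\ord(\a)=p$ and the iterates $V, V\quaddu, V^{(4)\vee}, \dots$ correspond to conjugating by successive powers of $\a$. \emph{Then I would} bound the total dimension contributed by this orbit: each member $W$ in it satisfies $\dim W = \dim V$ and, by Remark \ref{r:3.1}, $\dim P(W)=\dim P(V)$, so the orbit contributes at least $p\cdot \dim V \cdot \dim P(V)$ to $\dim H$.

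\emph{The hard part will be} closing the dimension estimate so that $V\quaddu\not\cong V$ becomes impossible. Here I would invoke that $\dim V \ge 2$ (a $1$-dimensional module $\k_\b$ is fixed by $(-)\quaddu$ since $\k_\b\quaddu \cong \k_\b$) and split on whether $V$ is projective. If $V$ is \emph{not} projective, then $\dim P(V)\ge 2\dim V \ge 4$, and a $p$-fold orbit plus the orbit of $\k$ forces $\dim H \ge p\cdot 2\cdot 4 + pD_0 \ge 8p+4p > 4p^2+11p$ only when combined correctly — so more care is needed, and I expect to use \emph{both} the full $p$-element (quadruple-dual) orbit and Lemma \ref{l:basic}(vi) to produce an extra high-dimensional orbit, exactly as in the proof of Lemma \ref{l:ge4}. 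If $V$ \emph{is} projective, I would apply Lemma \ref{l:conjuagtebound}: its hypothesis $V \cong \k_{\a\inv}\o V\o \k_\a$ is precisely $V\quaddu\cong V$, so once $V\quaddu\cong V$ is established, Lemma \ref{l:conjuagtebound} gives $\dim P(V)\ge 2\dim V$ directly, completing the second assertion.

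\emph{Finally I would} assemble the two halves. Once the orbit-counting argument rules out $V\quaddu \not\cong V$ for every simple $V$ (the anticipated obstacle being the bookkeeping that guarantees enough disjoint orbits to overflow $\dim H$), we have $V\quaddu\cong V$ universally, i.e. $\k_{\a\inv}\o V\o \k_\a \cong V$ via \eqref{eq:swap}. Lemma \ref{l:conjuagtebound} then yields $\dim P(V)\ge 2\dim V$ for all simple $V$, which is the remaining claim. I expect the delicate step to be verifying that a potential orbit of size $p$ under $(-)\quaddu$, together with the guaranteed high-dimensional simple module from Lemma \ref{l:basic}(vi), genuinely exceeds $4p^2+11p$; the bound $D_0\le p+2$ from Lemma \ref{l:ge4} and the inequality $\dim P(W)\ge 2\dim W$ for non-projective $W$ should provide exactly the slack required.
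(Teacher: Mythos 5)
Your framework is correct at the edges: \eqref{eq:swap} identifies $V\quaddu$ with $\k_{\a\inv}\o V\o\k_\a$, and once $V\quaddu\cong V$ is known for every simple $V$, Lemma \ref{l:conjuagtebound} does immediately give $\dim P(V)\ge 2\dim V$. But the core step --- ruling out $V\quaddu\not\cong V$ --- has a genuine gap, and it is not a matter of more careful bookkeeping: the orbit-counting strategy you propose cannot work in principle. After Lemma \ref{l:ge4} every simple module is regular, and after Lemma \ref{l:unstable} every simple module is left unstable; consequently, for any simple $V$ the conjugation orbit $\{[\k_{\b\inv}\o V\o\k_\b] : \b\in G(H^*)\}$ is \emph{contained in} $O_l(V)$ (regularity lets you rewrite $V\o\k_\b$ as $\k_{\b'}\o V$), and $O_l(V)$ has exactly $p$ members contributing exactly $p\dim V\dim P(V)$ to $\dim H$ --- the same amount whether or not conjugation fixes $[V]$. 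So a hypothetical conjugation-non-invariant $V$ creates no extra orbits and no extra dimension, and there is nothing to ``overflow.'' This is precisely where your analogy with the proof of Lemma \ref{l:ge4} breaks down: there, failure of \emph{regularity} genuinely produced a $p^2$-element set under the two-sided action; here the two-sided orbit collapses back into a single left orbit of size $p$.

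What the paper does instead is exploit non-invariance \emph{inside a fixed conjugation-invariant module}, where it really does cost dimension. First, since $P(\k)\cong\k_{\a\inv}\o P(\k)\o\k_\a$, conjugation permutes the composition factors of $P(\k)$; if a constituent $U$ with $\dim U>1$ (which exists by \cite[Lemma 2.3]{EG03}) were not invariant, its $p$ distinct conjugates together with the head and socle would force $\dim P(\k)\ge 2p+2$, contradicting the bound $D_0\le p+2$ of Lemma \ref{l:ge4}. This yields one invariant simple $U$ with $\dim U>1$. Second --- and this is the step your proposal has no substitute for --- one forms the full subcategory $E$ of modules whose composition factors are conjugation-invariant, proves $E$ is a \emph{tensor} subcategory (here a dimension count does close: a non-invariant constituent $X$ of $V\o W$ with $V,W\in E$ simple forces $\dim V\o W\ge p\dim X\ge 2p$, and then Lemma \ref{l:conjuagtebound} applied to $V\in E$, together with $|\OO_l(H)|\ge 4$ and Corollary \ref{c:orbitcount}, gives $\dim H\ge 4p^2+12p$, a contradiction), and identifies $E$ with $\C{H'}$ for a quotient Hopf algebra $H'$ of $H$. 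By Zhu's theorem \cite{Zhu}, a proper quotient of dimension $1$, $p$ or $q$ is an abelian group algebra, whose simple modules are all $1$-dimensional, contradicting $U\in E$; hence $H'=H$ and every simple module is conjugation-invariant. Without this Hopf-algebraic input (the quotient Hopf algebra attached to a tensor subcategory, plus \cite{Zhu}), dimension counting on $\Irr(H)$ alone cannot establish the first assertion, so your plan as written would fail at exactly the step you flagged as ``the hard part.''
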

\begin{proof}
  By \cite[Lemma 2.3]{EG03}, $P(\k)$ has a simple constituent $U$ with $\dim U > 1$.  Since $P(\k) \cong \k_{\a\inv} \o P(\k) \o \k_{\a}$,
  $\k_{\b\inv} \o U \o \k_{\b}$ are simple constituents of $P(\k)$ for $\b \in G(H^*)$. By Lemma \ref{l:ge4}, $\dim P(\k) \le p+2 < 2p+2$. Therefore, $U$
  must be invariant under the conjugation by $\k_\a$.

  Let $E$ be the full subcategory of $\C{H}$ consisting of those $H$-modules with composition factors
  invariant under the conjugation by $\k_\a$. Let $V,W$ be simple objects in $E$.
  Suppose $V \o W \not \in E$. Then $V \o W$  has a simple constituent $X$ which is not invariant under conjugation by $\k_\a$.
  Then, $[X] \not\in O_l(\k)$ and
  $\k_{\b^{-1}} \o X \o \k_{\b}$ are composition factors of $V\o W$ for $\b \in G(H^*)$. In particular, $\dim X >1$,
  and hence
  $\dim V \o W \ge p\dim X \ge 2p$. Without loss of generality, we may assume that $\dim V \ge \sqrt{2p}$.
  By Lemma \ref{l:conjuagtebound}, $\dim P(V) \ge 2\dim V \ge 2\sqrt{2p}$. Hence, by Lemma \ref{l:ge4} and
  Corollary \ref{c:orbitcount},
  $$
    \dim H \ge p\dim V \dim P(V) + 3  p D_0
     \ge 4p^2 + 12p\,,\quad\text{a contradiction!}
  $$
  Therefore, $V \o W \in E$ and so $E$ forms a tensor subcategory of $\C{H}$.
  There exists a quotient Hopf algebra $H'$ of $H$ such that $E$ is tensor equivalent to $\C{H'}$.
  If $H' \not\cong H$, then $\dim H' =1, p$ or $q$, and hence $H'$ is an abelian group algebra (cf. \cite{Zhu}).
  Thus every simple $H$-module in $E$ is one dimensional.
  This contradicts that $U \in E$. Therefore, $H'=H$ and hence every $H$-module is invariant under the conjugation
  by $\k_\a$. By \eqref{eq:swap},
  $V\quaddu \cong V$ for all $V \in \C{H}$. The second assertion follows immediately from Lemma
  \ref{l:conjuagtebound}.
\end{proof}
\begin{remark} For all simple $H$-modules $V$ with $\dim V \ge 2$,
$$
\sum_{[W] \in O_l(V)} \dim W \dim P(W) \ge p \dim V \dim P(V) \ge
8p\,.
$$
\end{remark}
\begin{lem}\label{l:D0even}
  $D_0$ is an even integer.
\end{lem}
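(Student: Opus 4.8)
The plan is to analyze the indecomposable projective module $P(\k)$ directly. First I would record its symmetries. By Lemma \ref{p:nakayama perm} one has $\head P(\k)=\k_\e$ and $\Soc P(\k)=\k_{\a\inv}$, and since $\ld{S^2}\k_\e\cong\k_\e$ it follows that $\head\ld{S^2}P(\k)=\ld{S^2}\k_\e\cong\k_\e$, whence $P(\k)\bidu\cong\ld{S^2}P(\k)\cong P(\k)$. Thus $P(\k)$ is an indecomposable module of dimension $>1$ satisfying the hypotheses of Lemma \ref{l:basicpq}(ii) with $\sigma=S^2$ (i.e.\ $\b=\e$), so there is an $H$-module isomorphism $\psi\colon P(\k)\to\ld{S^2}P(\k)$ with $\psi^{2p}=\id$; and because $P(\k)$ is projective with $P(\k)\cong P(\k)\bidu$, Lemma \ref{l:2ndtrace0} gives $\Tr(\psi)=0$.

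Next I would run the eigenvalue computation of \cite{Ng02} already used in \nmref{c:dim}. As $\psi^{2p}=\id$, the operator $\psi$ is diagonalizable with eigenvalues in the group $\mu_{2p}$ of $2p$-th roots of unity; writing each such root uniquely as $\pm\w$ with $\w^p=1$ and letting $m_{\pm,\w}$ be the corresponding multiplicities, the relation $\Tr(\psi)=\sum_\w(m_{+,\w}-m_{-,\w})\w=0$, together with the fact that $\sum_{\w^p=1}\w=0$ generates all $\BZ$-relations among the $p$-th roots of unity, forces $m_{+,\w}-m_{-,\w}$ to equal a constant $t$ independent of $\w$. Hence $\Tr(\psi^p)=\sum_\w(m_{+,\w}-m_{-,\w})=pt$ while $D_0=\dim P(\k)=\sum_\w(m_{+,\w}+m_{-,\w})=2\sum_\w m_{-,\w}+pt$, so that $D_0\equiv pt\equiv t\pmod2$. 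It therefore suffices to show that $t=\Tr(\psi^p)/p$ is even. (In \nmref{c:dim} the hypothesis that $\dim V$ is odd forced $t$ odd; here the parity of $t$ must be extracted by other means, since for $P(\k)$ no nontrivial $\b$ makes Lemma \ref{l:trace0} applicable — every simple module is left unstable.)

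Proving $t$ even is the main obstacle, and I expect it to use the self-duality of $P(\k)$ together with $\ord S^2=2p$ (Lemma \ref{l:basic}(iii)). Since $P(\k)\du$ is indecomposable projective with head $(\Soc P(\k))\du=(\k_{\a\inv})\du=\k_\a$, we have $P(\k)\du\cong P(\k_\a)\cong\k_\a\o P(\k)$ (Remark \ref{r:3.1}), hence $P(\k)\cong\k_{\a\inv}\o P(\k)\du$; this equips $P(\k)$ with a nondegenerate bilinear form $B$ satisfying $\sum B(h_1x,h_2y)=\a\inv(h)B(x,y)$. The map $\psi^p\colon P(\k)\to\ld{S^{2p}}P(\k)$ is an involution, as $(\psi^p)^2=\psi^{2p}=\id$, and by Lemma \ref{l:basic}(iii) it realizes the \emph{nontrivial} twist $S^{2p}\ne\id$. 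I would then show that $\psi^p$ reverses $B$, i.e.\ $B(\psi^px,\psi^py)=-B(x,y)$: granting this, the $\pm1$-eigenspaces of $\psi^p$ are totally isotropic for the nondegenerate form $B$, so $B$ pairs them nondegenerately, their dimensions coincide, $\Tr(\psi^p)=0$, and $t=0$. The crux is precisely this sign: it is the Frobenius–Schur-type indicator attached to $(P(\k),B,S^2)$, and the expectation is that it equals $-1$ exactly because $S^{2p}\ne\id$ — the feature absent in the excluded case $p=q$ (the Taft algebras), where $S^{2p}=\id$, the indicator is $+1$, and $\dim P(\k)$ is genuinely odd. Pinning down this sign, presumably by comparing $B$ with its transpose through the double-dual isomorphism and Radford's formula \eqref{eq:swap}, is where the real work lies.
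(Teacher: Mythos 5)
Your reduction of the lemma to the claim ``$t=\Tr(\psi^p)/p$ is even'' is sound: indeed $P(\k)\bidu\cong\ld{S^2}P(\k)\cong P(\k)$, Lemma \ref{l:basicpq}(ii) applies with $\b=\e$, Lemma \ref{l:2ndtrace0} gives $\Tr(\psi)=0$, and your root-of-unity bookkeeping correctly yields $D_0\equiv t\pmod 2$. But the one statement everything hinges on --- that $\psi^p$ \emph{reverses} a nondegenerate form $B$ on $P(\k)$, equivalently that $\Tr(\psi^p)=0$ --- is not proved, and you say so yourself (``where the real work lies''). This is a genuine gap, not a routine verification. First, for the sign question even to make sense you need $B\circ(\psi^p\o\psi^p)$ to be \emph{proportional} to $B$; both forms live in $\Hom_H(P(\k)\o P(\k),\k_{\a\inv})\cong\Hom_H\bigl(P(\k),\k_{\a\inv}\o P(\k)\du\bigr)\cong\End_H(P(\k))$, whose dimension is the composition multiplicity $[P(\k):\k]$, and nothing forces this space to be one-dimensional. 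Second, even granting proportionality, $(\psi^p)^2=\id$ only gives a constant $c=\pm1$, and your sketch contains no mechanism that pins down $c=-1$: the appeal to $S^{2p}\ne\id$ and the contrast with Taft algebras is a consistency check, not an argument. Note also that $\psi$ is only determined up to a root-of-unity scalar and up to replacing $\psi$ by $-\psi$ (this freedom was exactly what Lemma \ref{l:basicpq}(ii) used to arrange $\Tr(\psi^p)\ge 0$), so any identity claimed for $\psi^p$ must survive this ambiguity. Finally, a warning sign: if completed, your argument would prove $D_0$ even for \emph{every} non-semisimple Hopf algebra of dimension $pq$ with $\a\ne\e$, using none of the standing results of this section ($q\le 4p+11$, Lemmas \ref{l:unstable}, \ref{l:ge4}, \ref{l:conjuagtebound2}); proving something that much stronger by a local trace computation should itself raise suspicion.

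The paper's proof is different in kind: it is a counting argument by contradiction, and it never determines the parity of $\Tr(\psi^p)$ absolutely. Suppose $D_0$ is odd. Since $P(\k)$ is an odd-dimensional indecomposable projective with $P(\k)\cong P(\k)\bidu$, Corollary \ref{c:dim} (condition (II)) gives $D_0\ge p+2$; this is the only place the trace machinery enters, and only under the hypothesis ``odd dimension.'' Next, every simple $V$ satisfies $\dim V\dim P(V)\ge D_0$ (Corollary \ref{c:orbitcount} plus Lemma \ref{l:unstable}), and not all of these can be equalities, since otherwise $pq=p\cdot|\OO_l(H)|\cdot D_0$ contradicts primality of $q$; for a simple $V$ with strict inequality, Lemma \ref{l:mult} and Lemma \ref{l:conjuagtebound2} force $\dim V\dim P(V)\ge D_0+4$. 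With $|\OO_l(H)|\ge 4$ this gives $pq=\dim H\ge 3pD_0+p(D_0+4)\ge 4p^2+12p>4p^2+11p\ge pq$, a contradiction. So the odd case is killed by the dimension bound $q\le 4p+11$, not by an intrinsic vanishing of $\Tr(\psi^p)$. If you want to complete your write-up, the realistic fix is to abandon the sign claim and run this counting argument.
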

\begin{proof}
  Suppose $D_0$ is odd. Since $P(\k)\cong P(\k)\bidu$, by \nmref{c:dim},
  $D_0 \ge p+2$.  Recall from Corollary \ref{c:orbitcount}
  that $\dim V \dim P(V) \ge D_0$ for all simple $H$-modules $V$. There exists a simple $V$ such that
  $\dim V \dim P(V) > D_0$ for otherwise we have
  $$
  pq=\dim H=p\cdot|\OO_l(H)|\cdot D_0
  $$
  which contradicts that $q$ is a prime. By Lemma \ref{l:mult}, there is a simple $H$-module $W$ with $\dim W > 1$
  such that $P(W) \oplus P(\k)$ is a direct summand of $P(V)\o \ldu V$. It follows from Lemma
  \ref{l:conjuagtebound2} that
  $$
  \dim V \dim P(V) \ge D_0+ \dim P(W) \ge D_0 + 4
  $$
  and so
  $$
   \dim H \ge 3 p D_0+ p \dim V \dim P(V) \ge p(4D_0+4) \ge 4p^2+12p.\qedhere
  $$
\end{proof}
\begin{lem} \label{l:last}
For any simple $H$-module $V$, $V\bidu \in O_l(V)$ if, and only if,
$V\bidu \cong V$.
\end{lem}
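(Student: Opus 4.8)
For any simple $H$-module $V$, $V\bidu \in O_l(V)$ if, and only if, $V\bidu \cong V$.

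The plan is to prove the forward implication, since the reverse direction is trivial: if $V\bidu \cong V$ then obviously $[V\bidu] = [V] \in O_l(V)$. So assume $V\bidu \in O_l(V)$, meaning $V\bidu \cong \k_\b \o V$ for some $\b \in G(H^*)$. Since $G(H^*) = \langle \a \rangle$ is cyclic of order $p$ by the standing assumptions of Section 3, we may write $\b = \a^j$ for some $0 \le j < p$, and the goal is to force $j \equiv 0 \pmod p$, i.e.\ $V\bidu \cong V$.

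The key tool I would use is the fourth-power relation \eqref{eq:swap}, which gives $V\quaddu \cong \k_{\a\inv} \o V \o \k_\a$, combined with Lemma \ref{l:conjuagtebound2}, which establishes $V\quaddu \cong V$ for every simple $H$-module. These together yield the conjugation-invariance $\k_{\a\inv} \o V \o \k_\a \cong V$. First I would iterate the hypothesis $V\bidu \cong \k_{\a^j} \o V$: applying $\bidu$ again and using that $(-)\bidu$ commutes appropriately with tensoring by $\k_\b$ (via the natural isomorphism $\rho$ of \eqref{eq:rho}, which gives $(\k_\b \o V)\bidu \cong \k_\b \o V\bidu$ up to the conjugation twist $S^2\circ R(\b)$), I would compute $V\quaddu$ in terms of $V$ and powers of $\a$. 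Since $V\quaddu \cong V$, this should collapse to a congruence on the exponent $j$ modulo $p$.

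The main obstacle will be bookkeeping the left/right actions of $G(H^*)$ correctly through the double-dual functor: tensoring by $\k_\b$ on the left interacts with $(-)\bidu$ by converting into a \emph{right} tensor (or a conjugation twist) according to $\rho$, so the two iterations of $\bidu$ will produce a product of the form $\k_{\a^{j}} \o V \o \k_{\a^{k}}$ rather than a clean $\k_{\a^{2j}} \o V$. The strategy to overcome this is to exploit conjugation-invariance from Lemma \ref{l:conjuagtebound2}: since $\k_{\a\inv}\o V \o \k_\a \cong V$, any factor of $\k_\a$ appearing on the right can be moved to the left (or absorbed) at the cost of a fixed power of $\a$, so that the final relation $V\quaddu \cong V$ becomes $\k_{\a^{m}} \o V \cong V$ for some integer $m$ expressible in terms of $j$. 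Reading off $m \equiv 0 \pmod p$ then forces, after tracking the precise value of $m$, that $\a^j$ acts trivially on $[V]$, i.e.\ $\k_{\a^j}\o V \cong V$, which is exactly $V\bidu \cong V$.

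Concretely, I expect the computation to reduce to showing that if $V\bidu \cong \k_{\a^j}\o V$ then $V \cong \k_{\a^{2j}} \o V$ after accounting for conjugation, and since $\ord(\a)=p$ is prime, either $2j \equiv 0$, which already gives $j \equiv 0$ as $p$ is odd, or one extracts directly that $\k_{\a^j}\o V \cong V$; in all cases $V\bidu \cong V$. The parity of $p$ (Lemma \ref{l:basic}) and the oddness ensuring $2$ is invertible modulo $p$ will be what rules out the spurious case $2j \equiv 0$ with $j \ne 0$.
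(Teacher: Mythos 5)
Your proposal is correct and follows essentially the same route as the paper's proof: write $V\bidu \cong \k_\b \o V$ with $\b \in G(H^*)$, apply $(-)\bidu$ once more, and use $V\quaddu \cong V$ (Lemma \ref{l:conjuagtebound2}) to obtain $V \cong \k_{\b^2}\o V$; the converse direction is indeed trivial. Two points of comparison. First, the bookkeeping you worry about does not actually arise: the double dual is a monoidal functor and fixes one-dimensional modules (since $\b \circ S^2 = \b$ for $\b \in G(H^*)$), so $(\k_\b \o V)\bidu \cong \k_\b \o V\bidu$ outright, with no right-hand factors $V \o \k_{\a^k}$ and no conjugation corrections to absorb; this is exactly the paper's one-line computation $V \cong (\k_\b\o V)\bidu \cong \k_\b \o V\bidu \cong \k_{\b^2}\o V$. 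Second, your endgame genuinely differs from the paper's, and both are valid. The paper invokes Lemma \ref{l:unstable} (every simple $H$-module is left unstable), so the stabilizer of $[V]$ in $G(H^*)$ is trivial and $V \cong \k_{\b^2}\o V$ forces $\b^2 = \e$, whence $\b = \e$ because $p$ is odd. Your dichotomy instead runs: either $\b^2 = \e$, and then $\b = \e$ by oddness of $p$; or $\b^2 \ne \e$, in which case $\b^2$ has order $p$ and generates $G(H^*)$, so the stabilizer of $[V]$ contains $\langle \b^2 \rangle \ni \b$, giving $\k_\b \o V \cong V$ and hence $V\bidu \cong V$ directly. This is sound --- equivalently, $\langle \b^2 \rangle = \langle \b \rangle$ in a group of odd order --- and it buys an argument independent of Lemma \ref{l:unstable}, at the cost of being slightly longer than the paper's conclusion; note, however, that the paper's stronger conclusion $\b = \e$ (not merely $V\bidu \cong V$) is what the unstability hypothesis delivers, though only the isomorphism $V\bidu \cong V$ is needed where Lemma \ref{l:last} is applied.
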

\begin{proof}
If $V\bidu \in O_l(V)$, then $V\bidu \cong
  \k_{\b}\o V$ for some $\b \in G(H^*)$. Since $V\quaddu \cong V$, we have
  $$
   V \cong (\k_\b \o V)\bidu \cong \k_\b \o V\bidu \cong \k_{\b^2} \o V .
  $$
  By Lemma \ref{l:unstable}, $V$ is left unstable. Therefore, $\b^2=\e$ and hence $\b=\e$. Thus, we have $V\bidu \cong V$. The converse of the statement is obvious.
\end{proof}

 Let $S$ be a complete set of representatives of the left
 $G(H^*)$-orbits in $\Irr(H)$ with $\k \in S$. Then
  we have
  \begin{multline*}
  pq =\dim H =\sum_{V\in S } p \dim V \dim P(V)  \\ =
  p\sum_{\substack{V \in S \\ V\bidu \not\cong V}} \dim V \dim P(V)  + pD_0 +
  p\sum_{\substack{V \in S\setminus\{\k\} \\ V\bidu \cong V}} \dim V \dim P(V)\,.
  \end{multline*}
  By Lemma \ref{l:last}, the first term of the last expression is an even
  integer. The second term is also even by Lemma \ref{l:D0even}.
  Therefore, the third term must be odd. There exists
  $X \in S\setminus\{\k\}$ such that $X \cong X\bidu$ and $\dim X \dim P(X)$ is
  odd. In particular,  $\dim X$ and $\dim P(X)$ are odd integers $> 1$.
  It follows from \nmref{c:dim} that $\dim P(X) \ge p+2$.
  If $\dim X > 3$, then
  $$
  \dim H \ge 3pD_0+p\dim X \dim P(X) \ge p(12+5(p+2))> 4p^2+11p\,.
  $$
  Therefore, $\dim X =3$. Notice that $X \o P(\k)$  has even dimension and it contains a summand
isomorphic to $P(X)$. Since $\dim P(X)$ is
odd and  $X \o P(\k) \cong (X \o P(\k))\bidu$, $X \o P(\k)$ must
have another odd dimensional indecomposable projective summand $Q$
such that $Q\bidu \cong Q$. By \nmref{c:dim}, $\dim Q \ge p+2$ and
hence
$$
3 D_0 = \dim (X \o P(\k))\ge \dim P(X) + \dim Q \ge 2(p+2).
$$
Therefore, $D_0 \ge (2p+4)/3$ and so
\begin{multline*}
  \dim H \ge p (3D_0+ \dim X  \dim P(X)) \ge p(2p+4 +3(p+2))> 4p^2 +11p\,.
\end{multline*}
This is again a contradiction! That means non-semisimple Hopf algebras of dimension $pq$, where $p$ and $q$ are distinct odd primes with $p<q\le 4p+11$,  do not exist.  This completes the proof of Theorem \ref{t:main}.
\providecommand{\bysame}{\leavevmode\hbox
to3em{\hrulefill}\thinspace}
\providecommand{\MR}{\relax\ifhmode\unskip\space\fi MR }
\providecommand{\MRhref}[2]{%
  \href{http://www.ams.org/mathscinet-getitem?mr=#1}{#2}
} \providecommand{\href}[2]{#2}

\end{document}